\newtheorem{theo}{Theorem}
\newtheorem{cor}{Corollary}
\newtheorem{lem}{Lemma}
\newtheorem{prop}{Proposition}
\theoremstyle{definition}
\newtheorem{defn}{Definition}
\theoremstyle{remark}
\newtheorem{rem}{\bf Remark\/}
\numberwithin{equation}{section}
\def\1{{\mathchoice {\rm 1\mskip-4mu l} {\rm 1\mskip-4mu l}{\rm 1\mskip-4.5mu l} {\rm 1\mskip-5mu l}}}
\newcommand{\ds}{\displaystyle}
\title[Energy classes  associated to a  positive closed current]{Pluricomplex energy classes associated to a  positive closed current}
\author[J. Hbil]{Jawhar Hbil}
\email{jawhar\_x@hotmail.fr}
\author[M. Zaway]{Mohamed Zaway}
\email{mohamed\_zaway@yahoo.fr}
\author[N. Ghiloufi]{Noureddine Ghiloufi}
\email{noureddine.ghiloufi@fsg.rnu.tn}
\address{Department of Mathematics\\ Faculty of sciences of Gabes \\ University of Gabes \\ Zrig 6072 Gabes Tunisia.}
\subjclass[2000]{32U40; 32U05; 32U20}
\keywords{positive closed current, plurisubharmonic function, capacity, Monge-Amp\`ere Operator.}
\begin{document}

\begin{abstract}

The aim of this paper is to extend the domain of definition of $(dd^c\centerdot)^q\wedge T$ on some classes of plurisubharmonic (psh) functions, which are not necessary bounded, where $T$ is a positive closed current of bidimension $(q,q)$ on an open set $\Omega$ of $\Bbb C^n$. We introduce two classes  $\mathcal{F}_{p}^{T}(\Omega)$ and $\mathcal{E}_p^T(\Omega)$ and we show that they belong to the domain of definition of the operator $(dd^c\centerdot)^q\wedge T$. We also prove that all functions belong to these classes  are $C_T$-quasicontinuous and that the comparaison principle is valid in them.
\end{abstract}
\maketitle

\section{Introduction}

Let $\Omega$ be a bounded open set of $\Bbb C^n$ and denote by $PSH(\Omega)$ the set of psh functions on $\Omega$. The definition of the complex Monge-Amp\`ere operator $(dd^c\centerdot)^n$ on the set of psh functions has been studied  by Bedford and Taylor in \cite{Be-Ta}, they showed that this operator is well defined on the set of bounded psh functions and they established the comparaison principle to study the Dirichlet problem on $PSH(\Omega)\cap L^{\infty}(\Omega)$. The problem of extending its domain of definition was treated by many other authors, in particular Cegrell has introduced, between 1998 and 2004 (see \cite{Ce1,Ce2}), a general class $\mathcal{E}(\Omega)$: the class  of psh functions which are locally equal to decreasing limits of bounded psh functions vanishing on $\partial \Omega$ with bounded Monge-Amp\`ere mass on $\Omega$. He showed that the Monge-Amp\`ere operator is well defined on $\mathcal{E}(\Omega)$ and   this is the largest domain of definition if  the operator is required to be continuous under decreasing sequences. The study of this class leads to many results such that the comparaison principle, the solvability of the Dirichlet problem and the convergence in capacity.\\

Throughout this paper,   $T$ will be  a positive closed current of bidimension $(q,q)$ on $\Omega$ where $1\leq q\leq n$. The question is to extend the domain of definition of  the operator $(dd^c\centerdot)^q\wedge T$. This problem was studied by Dabbek and Elkhadhra \cite{Da-El} in the case of bounded psh functions. We will extend the domain of definition of this operator to some classes of unbounded psh functions.\\

In this paper we recall the classes $\mathcal F^T(\Omega)$ and $\mathcal E^T(\Omega)$ introduced in \cite{Ha-Du} where the Monge-Amp\`ere operator $(dd^c\centerdot)^q\wedge T$ is well defined and we introduce two new classes, the first will be  $\mathcal F_p^T(\Omega),\ p\geq1$ a subclass of  $\mathcal F^T(\Omega)$ and  the second will be $\mathcal E_p^T(\Omega)$.\\
In the first part we introduce the class $\mathcal E_p^T(\Omega)$ and we show that the Monge-Amp\`ere operator $(dd^c\centerdot)^q\wedge T$ is well defined on this class then we give some properties of the classes $\mathcal E_p^T(\Omega)$ and $\mathcal F^T(\Omega)$.\\
In the second part we prove that every functions in $\mathcal E_p^T(\Omega)$ or in  $\mathcal F^T(\Omega)$ are $C_T$-quasicontinuous; it means that they are continuous outside subsets of small $C_T$-capacity. The main tool of this result will be an estimate of the growth of $C_T(\{u<-s\})$. Indeed we prove that
$$C_T(\{u<-s\})=O\left(\frac1{s^{p+q}}\right)\quad (\hbox{resp. }C_T(\{u<-s\})=O\left(\frac1{s^q}\right))$$
for every $u\in\mathcal E_p^T(\Omega)$ (resp. $u\in\mathcal F^T(\Omega)$).

Using some analogous   Xing's inequalities, we prove in the last part the main result of this paper.

\noindent\textbf{Main result (Comparison principle)}
    \emph{Let $u\in \mathcal{F}^{T}(\Omega)$ and $v\in \mathcal E^{T}(\Omega)$. Then}
    $$\int_{\{u<v\}}(dd^cv)^q\wedge T\leq \int_{\{u<v\}\cup\{u=v=-\infty\}}(dd^cu)^q\wedge T.$$

\section{The classes $\mathcal{E}_{p}^{T}(\Omega)$ and $\mathcal{F}_{p}^{T}(\Omega)$}
\subsection{Preliminary results}
Let $\Omega$ be a hyperconvex domain of $\Bbb C^n$, that means it is open, bounded, connected and that there exists  $h\in PSH^-(\Omega)$ such that for all $c<0$, $\{z\in \Omega,\ h(z)<c\}$ is relatively compact in $\Omega$ where $PSH^-(\Omega)$  is the set of negative psh functions.
Let us introduce the Cegrell pluricomplex class $\mathcal{E}_{0}^{T}(\Omega)$ associated to $T$, slightly different to a class introduced in \cite{Ha-Du}, as follows:
$$ \mathcal{E}_{0}^{T}(\Omega):=\left\{\varphi \in PSH^-(\Omega)\cap L^{\infty}(\Omega);\ \lim_{z\to \partial \Omega\cap Supp\; T}\varphi(z)=0,\ \int_{\Omega}(dd^c\varphi)^q\wedge T<+\infty\right\}.$$
Using the same proof as in \cite{Ha-Du}, we can prove easly that this class is a convex cone and that for all $\psi \in PSH^-(\Omega)$ and $\varphi \in \mathcal{E}_{0}^{T}(\Omega)$ one has  $\max(\varphi,\psi)\in \mathcal{E}_{0}^{T}(\Omega)$.\\
In this section we introduce new energy classes $\mathcal{E}_{p}^{T}(\Omega)$ and $\mathcal{F}_{p}^{T}(\Omega)$, similar to Cegrell's ones and we will show that the Monge-Amp\`ere operator is well defined on them.\\
\begin{defn}\label{defn1}
For every real $p\geq 1$ we define $\mathcal{E}_{p}^{T}(\Omega)$ as the set:
$$\mathcal{E}_{p}^{T}(\Omega):=\left\{\varphi \in PSH^-(\Omega); \ \exists\ \mathcal{E}_{0}^{T}(\Omega)\ni \varphi_j\searrow \varphi, \ \sup_{j\geq1}\int_{\Omega}(-\varphi_j)^p(dd^c\varphi_j)^q\wedge T<+\infty\right\}.$$
When the sequence $(\varphi_j)_j$ associated to $\varphi$ can be chosen such that $$\ds\sup_{j\geq1}\int_{\Omega}(dd^c\varphi_j)^q\wedge T<+\infty,$$ we say that $\varphi \in \mathcal{F}_{p}^{T}(\Omega)$.
\end{defn}
It's Easy to check that $\mathcal{E}_{0}^{T}(\Omega)\subset\mathcal{F}_{p}^{T}(\Omega)\subset\mathcal{E}_{p}^{T}(\Omega)$ and that, using H\"older's Inequality, one has  $\mathcal{F}_{p_1}^{T}(\Omega)\subset\mathcal{F}_{p_2}^{T}(\Omega)$ for all $p_2\leq p_1$.\\
We recall the following result which will be  useful to prove some properties of our classes.
\begin{theo}\label{th 1}(See \cite{Da-El})
Suppose that $u,v\in\mathcal{E}_{0}^{T}(\Omega)$. If $p\geq 1$ then for every $0\leq s\leq q$ one has
$$\begin{array}{l}
 \ds\int_{\Omega}(-u)^p (dd^c u)^s \wedge (dd^c v)^{q-s}\wedge T\\
\ds\leq D_{s,p}\left( \int_{\Omega}(-u)^p (dd^c u)^q \wedge T\right)^{\frac{p+s}{p+q}}\left( \int_{\Omega}(-v)^p (dd^c v)^q \wedge T\right)^{\frac{q-s}{p+q}}
\end{array}
$$
where $D_{s,1}=e^{(j+1)(q-j)}$ and $D_{s,p}=p^{\frac{(p+s)(q-s)}{p-1}}$, $p>1$.
\end{theo}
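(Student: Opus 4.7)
The plan is to derive the estimate from a one-step exchange identity obtained by combining integration by parts with the Cauchy--Schwarz inequality for positive closed currents, and then to iterate the resulting relation so as to interpolate between the extreme cases $s=0$ and $s=q$. Because $u,v\in\mathcal{E}_0^T(\Omega)$ are bounded and vanish at $\partial\Omega\cap Supp\,T$ while $T$ is positive closed, every integration by parts performed below produces no boundary contribution. For brevity I set
$$I_s:=\int_{\Omega}(-u)^p(dd^cu)^s\wedge(dd^cv)^{q-s}\wedge T\quad\text{and}\quad J:=\int_{\Omega}(-v)^p(dd^cv)^q\wedge T,$$
so the target inequality reads $I_s\leq D_{s,p}\,I_q^{(p+s)/(p+q)}J^{(q-s)/(p+q)}$.

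The key computation uses the algebraic identity $(-u)^p\,dd^cu=p(-u)^{p-1}\,du\wedge d^cu-\frac{1}{p+1}dd^c((-u)^{p+1})$, which, after integrating the second term by parts twice against the closed current $(dd^cu)^{s-1}\wedge(dd^cv)^{q-s}\wedge T$, yields (for $1\leq s\leq q$)
$$I_s=p\int_{\Omega}(-u)^{p-1}\,du\wedge d^cu\wedge(dd^cu)^{s-1}\wedge(dd^cv)^{q-s}\wedge T.$$
A further integration by parts that shifts one factor $dd^cv$ onto the gradient factor rewrites $I_s$ (up to sign) as an integral in which $du\wedge d^cu$ is replaced by $du\wedge d^cv$. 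Applying the Cauchy--Schwarz inequality
$$\left|\int_{\Omega}f\,du\wedge d^cv\wedge\Theta\right|^2\leq\int_{\Omega}f\,du\wedge d^cu\wedge\Theta\cdot\int_{\Omega}f\,dv\wedge d^cv\wedge\Theta,$$
valid for any nonnegative function $f$ and positive closed form $\Theta$, and then undoing the gradient-to-Monge--Amp\`ere reduction on each factor, produces the one-step estimate
$$I_s\leq p\,I_{s+1}^{1/2}\,\widetilde J_s^{1/2}\quad(0\leq s\leq q-1),$$
where $\widetilde J_s$ is a mixed integral carrying the weight $(-v)^p$ in place of $(-u)^p$.

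Next I would iterate this relation from $s$ upward to $s=q$ and simultaneously apply the version with $u,v$ interchanged to control every $\widetilde J_s$ by $I_q$ and $J$; the telescoping product of these bounds gives the desired H\"older-type inequality once the exponents are solved from the resulting linear system, producing precisely $(p+s)/(p+q)$ and $(q-s)/(p+q)$. The accumulated multiplicative constants collapse, after $q-s$ rounds, into $p^{(p+s)(q-s)/(p-1)}$ when $p>1$, while the case $p=1$ is recovered by letting $p\to 1^+$: the factor $p^{1/(p-1)}$ tends to $e$ and produces the constant $e^{(s+1)(q-s)}$.

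The main obstacle is the bookkeeping of constants and exponents through the iteration: aligning each instance of Cauchy--Schwarz so that the exponents match the announced geometric means, and verifying that the product of constants collapses into $D_{s,p}$ as stated. A secondary but standard technical point is to justify the integrations by parts on $\mathcal{E}_0^T(\Omega)$ rather than on smooth psh functions; this is handled by approximating $u$ and $v$ by decreasing sequences in $PSH^-(\Omega)\cap C^\infty(\overline\Omega)\cap\mathcal{E}_0^T(\Omega)$, proving the inequality for the smooth approximants with uniform constants, and passing to the limit via Bedford--Taylor continuity of mixed Monge--Amp\`ere products along decreasing sequences of bounded psh functions.
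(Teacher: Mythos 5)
First, a point of comparison that matters here: the paper offers no proof of this statement at all --- Theorem \ref{th 1} is recalled from Dabbek--Elkhadhra \cite{Da-El} --- so there is no internal argument to measure yours against, only the known proof in the cited literature (Cegrell-type energy estimates). Measured against that, your outline has the right overall philosophy: integrate by parts using the vanishing of $u,v$ on $\partial\Omega\cap Supp\,T$, establish a one-step exchange inequality, iterate it $q-s$ times, and close the system by invoking the same inequality with $u$ and $v$ interchanged. Your closing remarks are also sound: the $p=1$ constant is indeed the limit of $p^{(p+s)(q-s)/(p-1)}$ as $p\to1^+$ (the paper's $D_{s,1}=e^{(j+1)(q-j)}$ is a typo for $e^{(s+1)(q-s)}$), and the regularization via decreasing approximation plus Bedford--Taylor convergence is the standard way to justify the integrations by parts.

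The genuine gap is that your one-step estimate is both underived and of the wrong shape for $p>1$. After you reduce $I_s$ to $p\int_\Omega(-u)^{p-1}du\wedge d^cu\wedge(dd^cu)^{s-1}\wedge(dd^cv)^{q-s}\wedge T$, the announced ``further integration by parts that shifts one factor $dd^cv$ onto the gradient factor'' does not close: moving a derivative off $dd^cv$ necessarily differentiates the weight $(-u)^{p-1}$, producing extra terms with weight $(-u)^{p-2}$ that your sketch ignores (they vanish only when $p=1$, which is exactly why the gradient Cauchy--Schwarz route is the $p=1$ argument). Moreover, a $\bigl(\tfrac12,\tfrac12\bigr)$ exponent split iterates to dyadic exponents and cannot produce $\tfrac{p+s}{p+q}$ and $\tfrac{q-s}{p+q}$, nor can a telescoping product of factors $p$ produce the $\tfrac1{p-1}$ in $D_{s,p}$. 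The mechanism that actually works for $p>1$ is: integration by parts gives $\int(-u)^p\,dd^cv\wedge S\le p\int(-v)(-u)^{p-1}\,dd^cu\wedge S$; H\"older with exponents $p$ and $p/(p-1)$ with respect to the positive measure $dd^cu\wedge S$ then yields $a\le p\,b^{1/p}X^{(p-1)/p}$, where $b$ is the same mixed quantity with $u,v$ swapped and $X=\int(-u)^p dd^cu\wedge S$; combining with the swapped relation $b\le p\,a^{1/p}Y^{(p-1)/p}$ and solving for $a$ gives $a\le p^{p/(p-1)}X^{p/(p+1)}Y^{1/(p+1)}$. It is this bootstrap --- not Cauchy--Schwarz on gradients --- that generates both the non-dyadic exponents and the $p^{1/(p-1)}$-type constant, and it is this relation that must be iterated to obtain the stated $D_{s,p}$. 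You should replace your one-step estimate by this H\"older/bootstrap step and rerun the iteration.
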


We begin by showing that the two introduced classes inherit some properties of the energy class $\mathcal{E}_{0}^{T}(\Omega).$

\begin{theo}
The classes $\mathcal{E}_{p}^{T}(\Omega)$ and $\mathcal{F}_{p}^{T}(\Omega)$ are convex cones.
\end{theo}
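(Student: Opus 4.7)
The plan is to verify the two defining properties of a convex cone, namely stability under multiplication by a nonnegative scalar and under addition. The scalar case is routine: if $\varphi\in\mathcal{E}_p^T(\Omega)$ is approximated by $\{\varphi_j\}\subset\mathcal{E}_0^T(\Omega)$ and $\lambda\geq 0$, then $\lambda\varphi_j\in\mathcal{E}_0^T(\Omega)$ because that class is already known to be a convex cone; moreover $\lambda\varphi_j\searrow\lambda\varphi$ and
$$\int_\Omega(-\lambda\varphi_j)^p(dd^c\lambda\varphi_j)^q\wedge T=\lambda^{p+q}\int_\Omega(-\varphi_j)^p(dd^c\varphi_j)^q\wedge T,$$
so the sequence inherits the required uniform bound. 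The same rescaling handles the extra mass bound that distinguishes $\mathcal{F}_p^T(\Omega)$ from $\mathcal{E}_p^T(\Omega)$.

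The substantive step is closure under addition. Given $\varphi,\psi\in\mathcal{E}_p^T(\Omega)$ with approximating sequences $\{\varphi_j\},\{\psi_j\}\subset\mathcal{E}_0^T(\Omega)$, I would take $\eta_j:=\varphi_j+\psi_j$ as the candidate approximant of $\varphi+\psi$: it lies in $\mathcal{E}_0^T(\Omega)$ by the convex cone property of that class, and $\eta_j\searrow\varphi+\psi$. The whole matter reduces to a uniform bound on $\int_\Omega(-\eta_j)^p(dd^c\eta_j)^q\wedge T$. I would perform two simultaneous splittings: the elementary convexity inequality $(-\eta_j)^p\leq 2^{p-1}\left[(-\varphi_j)^p+(-\psi_j)^p\right]$ separates the weight, and the Newton binomial expansion
$$(dd^c\eta_j)^q=\sum_{s=0}^q\binom{q}{s}(dd^c\varphi_j)^s\wedge(dd^c\psi_j)^{q-s}$$
separates the Monge–Amp\`ere factor. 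The full integral is thereby reduced to a finite sum of expressions of the shape $\int_\Omega(-u)^p(dd^cu)^s\wedge(dd^cv)^{q-s}\wedge T$ with $\{u,v\}=\{\varphi_j,\psi_j\}$. Each such expression is controlled by Theorem \ref{th 1}, applied once with $(u,v)=(\varphi_j,\psi_j)$ and once with the roles swapped, producing a product of the two $p$-energies, which are uniformly bounded by hypothesis on the defining sequences.

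For the class $\mathcal{F}_p^T(\Omega)$, one also needs $\sup_j\int_\Omega(dd^c\eta_j)^q\wedge T<+\infty$. Expanding again by the binomial formula, the problem reduces to uniformly bounding each mixed mass $\int_\Omega(dd^c\varphi_j)^s\wedge(dd^c\psi_j)^{q-s}\wedge T$. Here I would invoke the H\"older-type inequality for mixed Monge–Amp\`ere masses against $T$,
$$\int_\Omega(dd^cu)^s\wedge(dd^cv)^{q-s}\wedge T\leq\left(\int_\Omega(dd^cu)^q\wedge T\right)^{s/q}\left(\int_\Omega(dd^cv)^q\wedge T\right)^{(q-s)/q},$$
obtained via iterated integration by parts together with Cauchy–Schwarz, exactly as in the Chern–Levine–Nirenberg scheme adapted to the weighted setting $(dd^c\bullet)\wedge T$. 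The main obstacle I foresee is precisely this twisted mixed-mass inequality: while its untwisted analogue in $\mathcal{E}_0(\Omega)$ is classical, here one must verify that the boundary contributions produced by each integration by parts truly vanish, which rests on the boundary decay condition $\lim_{z\to\partial\Omega\cap\mathrm{Supp}\,T}\varphi(z)=0$ built into the definition of $\mathcal{E}_0^T(\Omega)$. Once this is in hand, summing the binomial terms gives the required uniform mass bound and completes the proof.
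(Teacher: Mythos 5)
Your proof follows essentially the same route as the paper: reduce to closure under addition, split the weight $(-\varphi_j-\psi_j)^p$ (the paper uses Minkowski's inequality where you use the convexity bound $2^{p-1}[(-\varphi_j)^p+(-\psi_j)^p]$, which is equivalent for this purpose), expand $(dd^c(\varphi_j+\psi_j))^q$ by the binomial formula, and control each mixed term via Theorem \ref{th 1} applied with the roles of the two functions interchanged as needed. You are in fact more complete than the paper on the $\mathcal{F}_p^T(\Omega)$ case, whose extra uniform mass bound the paper leaves implicit and which your H\"older-type mixed-mass inequality (standard in the Cegrell/Dabbek--Elkhadhra framework) correctly supplies.
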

\begin{proof}
It suffices to prove that $u+v\in \mathcal{E}_{p}^{T}(\Omega)$ for every $u,v\in \mathcal{E}_{p}^{T}(\Omega)$. Let $(u_j)_j$ and $(v_j)_j$ be two sequences that decrease to $u$ and $v$ respectively as in Definition \ref{defn1}. We want to estimate $$\int_\Omega (-u_j-v_j)^p (dd^c(u_j+v_j))^q\wedge T.$$
Thanks to Minkowsky Inequality, it is enough to estimate the following terms:
$$\int_{\Omega}(-u_j)^p (dd^c u_j)^s \wedge(dd^cv_j)^{q-s} \wedge T$$ and $$\int_{\Omega}(-v_j)^p (dd^c u_j)^s \wedge(dd^cv_j)^{q-s} \wedge T$$
for all $0<s<q$.
Using Theorem \ref{th 1}, we can estimate last terms by
$$\int_{\Omega}(-u_j)^p (dd^c u_j)^q\wedge T\quad \hbox{and} \quad \int_{\Omega}(-v_j)^p (dd^c v_j)^q\wedge T.$$
As these sequences are uniformly bounded by the definition of $\mathcal{E}_{p}^{T}(\Omega)$, the result follows.
\end{proof}
\begin{prop}\label{prop 1}
Let $u\in\mathcal{E}_{p}^{T}(\Omega)$  (resp. $\mathcal{F}_{p}^{T}(\Omega)$) and $v\in PSH^-(\Omega)$. Then the function  $w:=\max(u,v)$ is in $\mathcal{E}_{p}^{T}(\Omega)$ (resp. in $\mathcal{F}_{p}^{T}(\Omega)$).
\end{prop}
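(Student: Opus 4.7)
The plan is to build a defining sequence for $w:=\max(u,v)$ by truncating the defining sequence of $u$ against $v$. Let $(u_j)_j\subset\mathcal{E}_0^T(\Omega)$ be a sequence decreasing to $u$ that witnesses membership of $u$ in $\mathcal{E}_p^T(\Omega)$ (resp. $\mathcal{F}_p^T(\Omega)$), and set $w_j:=\max(u_j,v)$. By the stability property recalled in the preliminaries (the maximum of a function in $\mathcal{E}_0^T(\Omega)$ and a negative psh function stays in $\mathcal{E}_0^T(\Omega)$), each $w_j$ lies in $\mathcal{E}_0^T(\Omega)$, and the sequence $(w_j)_j$ decreases to $w$. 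It remains to produce the required uniform integral bound.

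For the $\mathcal{E}_p^T$ case, since $w_j\geq u_j$ we have $(-w_j)^p\leq(-u_j)^p$, hence
$$\int_\Omega(-w_j)^p(dd^cw_j)^q\wedge T\leq \int_\Omega(-u_j)^p(dd^cw_j)^q\wedge T.$$
Next, I would apply Theorem \ref{th 1} with $s=0$ to the pair $(u_j,w_j)$, both in $\mathcal{E}_0^T(\Omega)$, to obtain
$$\int_\Omega(-u_j)^p(dd^cw_j)^q\wedge T\leq D_{0,p}\,A_j^{p/(p+q)}\,B_j^{q/(p+q)},$$
where $A_j:=\int_\Omega(-u_j)^p(dd^cu_j)^q\wedge T$ and $B_j:=\int_\Omega(-w_j)^p(dd^cw_j)^q\wedge T$. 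The two inequalities combine into the self-referential bound $B_j\leq D_{0,p}\,A_j^{p/(p+q)}\,B_j^{q/(p+q)}$. Since $B_j$ is finite (as $w_j\in\mathcal{E}_0^T(\Omega)$ is bounded and has finite $T$-weighted Monge-Amp\`ere mass), we may divide through by $B_j^{q/(p+q)}$ to get $B_j\leq D_{0,p}^{(p+q)/p}\,A_j$, whence $\sup_j B_j\leq D_{0,p}^{(p+q)/p}\sup_j A_j<+\infty$, so that $w\in\mathcal{E}_p^T(\Omega)$.

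For the $\mathcal{F}_p^T$ case, the same sequence $(w_j)$ already delivers the $(-w_j)^p$-weighted bound above; one further needs $\sup_j\int_\Omega(dd^cw_j)^q\wedge T<+\infty$. Here I would invoke the standard comparison of Monge-Amp\`ere masses for $\mathcal{E}_0^T$-functions vanishing on $\partial\Omega\cap Supp\,T$: since $u_j\leq w_j$ in $\mathcal{E}_0^T(\Omega)$, an integration-by-parts argument yields $\int_\Omega(dd^cw_j)^q\wedge T\leq\int_\Omega(dd^cu_j)^q\wedge T$, the right-hand side being uniformly bounded by the defining property of $\mathcal{F}_p^T$.

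The main subtlety is the rearrangement of the self-referential estimate $B_j\leq C\,A_j^{p/(p+q)}\,B_j^{q/(p+q)}$: one must know \emph{a priori} that $B_j<+\infty$ before dividing, and this finiteness is exactly what the membership of $w_j$ in $\mathcal{E}_0^T(\Omega)$ guarantees. Once that is secured, the proof reduces cleanly to Theorem \ref{th 1} applied to $(u_j,w_j)$ with $s=0$, together with the stability of $\mathcal{E}_0^T(\Omega)$ under maxima with negative psh functions.
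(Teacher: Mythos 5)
Your proof is correct and follows essentially the same route as the paper's: the same truncated sequence $w_j:=\max(u_j,v)$, the same application of Theorem \ref{th 1} with $s=0$ to the pair $(u_j,w_j)$, and the same rearrangement of the resulting self-referential bound $B_j\leq D_{0,p}A_j^{p/(p+q)}B_j^{q/(p+q)}$. You are in fact slightly more careful than the paper, which neither records the a priori finiteness of $B_j$ needed to divide through nor spells out the additional unweighted mass bound $\int_\Omega(dd^cw_j)^q\wedge T\leq\int_\Omega(dd^cu_j)^q\wedge T$ required for the $\mathcal{F}_p^T(\Omega)$ case.
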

\begin{proof}
Let $(u_j)_j$ be a sequence that decreases to $u$ as in Definition \ref{defn1} and take $w_j:=\max(u_j,v)$. The sequence $(w_j)$ decreases to $w$. So it's enough to prove that $$\ds\sup_j\int_{\Omega}(-w_j)^p (dd^cw_j)^q\wedge T<+\infty.$$ Thanks to  Theorem \ref{th 1}, one has
$$\begin{array}{lcl}
 \ds\int_{\Omega}(-w_j)^p (dd^c w_j)^q\wedge T&\leq&\ds \int_{\Omega}(-u_j)^p (dd^c w_j)^q\wedge T\\
&\leq&\ds D_{0,p}\left( \int_{\Omega}(-u_j)^p (dd^c u_j)^q \wedge T\right)^{\frac{p}{p+q}}\left( \int_{\Omega}(-w_j)^p (dd^c w_j)^q \wedge T\right)^{\frac{q}{p+q}}.
\end{array}
$$
Therefore $$\int_{\Omega}(-w_j)^p (dd^c w_j)^q\wedge T\leq D_{0,p}^{\frac{p+q}{p}}\int_{\Omega}(-u_j)^p (dd^c u_j)^q\wedge T .$$
The right-hand side is uniformly bounded because $u\in\mathcal{E}_{p}^{T}(\Omega)$ and the result follows.
\end{proof}
The most important result of this section is the following theorem which proves that the  Monge-Amp\`ere operator $(dd^c\centerdot)^q\wedge T$ is well defined on the new classes.
\begin{theo}\label{th 3}
Let $u\in\mathcal E_p^T(\Omega)$ and $(u_j)_j$ be a sequence of psh functions that decreases to $u$ as in Definition \ref{defn1}.  Then $(dd^cu_j)^q\wedge T$ converges weakly to a positive measure $\mu$ and this limit is independent of the choice of the sequence $(u_j)_j$. We set $(dd^cu)^q\wedge T:=\mu$.
\end{theo}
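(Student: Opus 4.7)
The plan is to follow the Cegrell strategy \cite{Ce2}, adapted here to the current-weighted Monge--Amp\`ere operator. I would split the argument into three steps: a local uniform mass bound giving weak-$*$ precompactness, a truncation procedure identifying subsequential weak limits, and a verification that the resulting limit depends only on $u$.

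For the mass bound, fix a compact $K\subset\subset\Omega$ and use the hyperconvexity of $\Omega$ to pick $\rho\in\mathcal{E}_0^T(\Omega)$ with $\rho\leq-1$ on $K$. Since $p\geq1$ we have $(-\rho)^p\geq1$ on $K$, hence
$$\int_K(dd^cu_j)^q\wedge T\leq\int_\Omega(-\rho)^p(dd^cu_j)^q\wedge T.$$
Theorem~\ref{th 1} with $s=0$ applied to $(\rho,u_j)$ bounds the right-hand side by $D_{0,p}\bigl(\int_\Omega(-\rho)^p(dd^c\rho)^q\wedge T\bigr)^{p/(p+q)}\bigl(\int_\Omega(-u_j)^p(dd^cu_j)^q\wedge T\bigr)^{q/(p+q)}$, which is uniform in $j$ by the very definition of $\mathcal{E}_p^T(\Omega)$. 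Banach--Alaoglu then yields weak-$*$ precompactness of the family $\{(dd^cu_j)^q\wedge T\}_j$.

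For the identification step, truncate at level $k$: set $u_j^k:=\max(u_j,-k)\in\mathcal{E}_0^T(\Omega)$ and $u^k:=\max(u,-k)$. Since $u_j^k\downarrow u^k$ and all functions involved are locally bounded, the Bedford--Taylor continuity theorem (in the current-weighted setting, cf.~\cite{Da-El}) gives $(dd^cu_j^k)^q\wedge T\rightharpoonup(dd^cu^k)^q\wedge T$ as $j\to+\infty$ for each fixed $k$. By locality of the operator, $(dd^cu_j)^q\wedge T$ and $(dd^cu_j^k)^q\wedge T$ coincide on the open set $\{u_j>-k\}$, and the tail
$$\int_{\{u_j\leq-k\}}(dd^cu_j)^q\wedge T\leq\frac{1}{k^p}\int_\Omega(-u_j)^p(dd^cu_j)^q\wedge T\leq\frac{C}{k^p}$$
is uniformly small in $j$. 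Testing any weak subsequential limit $\mu$ of $(dd^cu_j)^q\wedge T$ against $\varphi\in C_c(\Omega)$, then sending first $j\to+\infty$ at fixed $k$ and then $k\to+\infty$, yields $\mu(\varphi)=\lim_k\int\varphi(dd^cu^k)^q\wedge T$. The right-hand side is independent of the subsequence, so every limit point coincides and the full sequence converges weakly to a positive measure $\mu$. Since $u^k=\max(u,-k)$ depends only on $u$, so does $\mu$, which gives independence from the choice of $(u_j)_j$.

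The main obstacle is the exchange of limits in the identification step: one needs to control simultaneously the weak limit in $j$ (handled by Bedford--Taylor) and the truncation limit in $k$ (handled by the tail estimate above). The tail estimate is precisely what the energy hypothesis $\sup_j\int(-u_j)^p(dd^cu_j)^q\wedge T<+\infty$ is designed to supply; without it the limit need not exist nor be unique. The other point deserving care is the justification of locality, i.e., the identification $(dd^cu_j)^q\wedge T=(dd^cu_j^k)^q\wedge T$ on $\{u_j>-k\}$, in a form strong enough to pass to the weak limit in $j$, which follows from the standard plurifine locality of mixed Monge--Amp\`ere-type currents against $T$.
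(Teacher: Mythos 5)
Your proposal is essentially correct and rests on the same mechanism as the paper's own proof: truncate at level $-k$, invoke the Dabbek--Elkhadhra convergence theorem for the uniformly bounded truncations, and absorb the discrepancy between $(dd^cu_j)^q\wedge T$ and its truncated version by the Chebyshev-type bound $\int_{\{u_j\leq-k\}}(dd^cu_j)^q\wedge T\leq k^{-p}\sup_m\int_\Omega(-u_m)^p(dd^cu_m)^q\wedge T$. Your organization is cleaner than the paper's: you make the weak-$*$ precompactness explicit via Theorem~\ref{th 1} applied to an exhaustion $\rho\in\mathcal{E}_0^T(\Omega)$, and you dispense with the auxiliary regularizations $u_{r_j}$ and the functions $\widetilde{u_j}=\max(u_{r_j}+\delta,2u_j)$ that the paper introduces only to gain continuity near $Supp\,\chi$; what each approach buys is comparable, since both reduce to the bounded case plus a uniform $O(k^{-p})$ tail.

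Two details need completing before the limit exchange closes. First, since the two measures only agree off $\{u_j\leq-k\}$, you must control the mass of \emph{both} of them on that set; you bound only $\int_{\{u_j\leq-k\}}(dd^cu_j)^q\wedge T$, whereas the companion term requires $\int_{\{u_j\leq-k\}}(dd^cu_j^k)^q\wedge T\leq k^{-p}\int_\Omega(-u_j^k)^p(dd^cu_j^k)^q\wedge T$, and the uniform boundedness of the right-hand side is not part of the hypothesis: it follows from the estimate in the proof of Proposition~\ref{prop 1}, namely $\int_\Omega(-\max(u_j,-k))^p(dd^c\max(u_j,-k))^q\wedge T\leq D_{0,p}^{(p+q)/p}\int_\Omega(-u_j)^p(dd^cu_j)^q\wedge T$. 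The paper's proof carries exactly this two-term bound. Second, $\{u_j>-k\}$ is not Euclidean-open ($u_j$ is only upper semicontinuous), so the ``locality of the operator'' you invoke is the plurifine version for bounded psh functions against $T$; it is true (and is in effect what Theorem~\ref{th 7} of the paper establishes in a more general setting), but it deserves a citation or a short argument rather than a passing phrase. With these two points supplied, your argument correctly identifies every subsequential limit with $\lim_k(dd^c\max(u,-k))^q\wedge T$, which depends only on $u$, and the theorem follows.
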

\begin{proof}
Let $0\leq\chi\in\mathcal D(\Omega)$, $\delta=\sup\{u_1(z);\ z\in Supp\chi\}$ and $\varepsilon>0$. There exists a sequence $(r_j)_j$ such that $0<r_j<r_{j-1}$ and $$r_j<dist(\{u_j<\frac{\delta}{2}\},\Omega^c).$$
Let $$u_{r_j}(z):=\int_{\Bbb B} u_j(z+r_j\xi)dV(\xi)$$ where $dV$ is the normalized Lebesgue measure on the unit ball $\Bbb B$. Then one has
$$\left|\int_\Omega\chi (dd^cu_{r_j})^q \wedge T-\chi (dd^cu_j)^q\wedge T\right|<\varepsilon.$$
The function $u_{r_j}$ is continuous, psh on $\{u_j<\frac{\delta}{2}\}$ and $u_j\leq u_{r_j}$ on $\Omega$. Let $\widetilde{u_j}=\max(u_{r_j}+\delta,2u_j)$. Then the sequence $(\widetilde{u_j})_j$ decreases to a psh function  $\widetilde{u}$ and $\widetilde{u_j}\in \mathcal{E}_{0}^{T}(\Omega)$ by Proposition \ref{prop 1}. Furthermore, using the same technic of the previous proof, we obtain
$$\sup_{j\geq1}\int_{\Omega}(-\widetilde{u_j})^p(dd^c\widetilde{u_j})^q\wedge T<+\infty.$$
The proof of the theorem will be complete if we show that $$\ds \lim_{j\to +\infty}\int_\Omega \chi (dd^c\widetilde{u_j})^q \wedge T$$  exists.\\
Let $h$ be an exhaustion function in $\mathcal E_0^T(\Omega)$. Then
$$\begin{array}{l}
 \ds\int_\Omega(-\widetilde{u})^p(dd^ch)^q\wedge T=\ds \lim_{j\to+\infty}\int_\Omega(-\widetilde{u_j})^p(dd^ch)^q\wedge T\\
\leq \ds D_{0,p}\sup_{j\geq 1}\left( \int_{\Omega}(-\widetilde{u_j})^p (dd^c \widetilde{u_j})^q \wedge T\right)^{\frac{p}{p+q}}\left( \int_{\Omega}(-h)^p (dd^c h)^q \wedge T\right)^{\frac{q}{p+q}}<+\infty.
\end{array}$$
Thanks to Dabbek-Elkhadhra \cite{Da-El},  the sequence of measures $(dd^c\max(\widetilde{u_j},-k))^q\wedge T$ converges weakly for every $k$. So it is enough to control $$\left|\int\chi (dd^cu_{r_j})^q \wedge T-\chi(dd^c\max(\widetilde{u_j},-k))^q\wedge T \right|.$$
Since $\widetilde{u_j}$ is continuous near $Supp\chi$ then
$$\begin{array}{ll}
 &\ds\left|\int\chi (dd^cu_j)^q \wedge T-\chi(dd^c\max(\widetilde{u_j},-k))^q\wedge T \right|\\
=&\ds\left|\int_{\{\widetilde{u}\leq -k\}}\chi (dd^c\widetilde{u_j})^q \wedge T +\int_{\{\widetilde{u}> -k\}}\chi (dd^c\widetilde{u_j})^q \wedge T\right.\\
&\ds\left.-\int_{\{\widetilde{u}\leq -k\}}\chi (dd^c\max(\widetilde{u_j},-k))^q \wedge T-\int_{\{\widetilde{u}> -k\}}\chi (dd^c\max(\widetilde{u_j},-k))^q \wedge T \right|\\
\leq&\ds\int_{\{\widetilde{u}\leq -k\}}\chi (dd^c\widetilde{u_j})^q \wedge T+\int_{\{\widetilde{u}\leq -k\}}\chi (dd^c\max(\widetilde{u_j},-k))^q \wedge T\\
\leq& \ds\frac{\sup\chi}{k^p}\int_{\{-\widetilde{u}\geq k\}}k^p\left[(dd^c\widetilde{u_j})^q \wedge T+(dd^c\max(\widetilde{u_j},-k))^q \wedge T\right]\\
\leq& \ds\frac{\sup\chi}{k^p} \int_{\Omega}(-\widetilde{u})^p(dd^c\widetilde{u_j})^q\wedge T+(-\max(\widetilde{u_j},-k))^pdd^c\max(\widetilde{u_j},-k))^q\wedge T\\
\leq &\ds C\frac{\sup\chi}{k^p} \sup_{m\geq 1}\int_{\Omega}(-\widetilde{u}_m)^p(dd^c\widetilde{u_m})^q\wedge T.
\end{array}
$$
This completes the proof of the theorem.
\end{proof}

\begin{theo}
If $u\in \mathcal{E}_1^{T}(\Omega)$ then $$\int_{\Omega} u (dd^cu)^q\wedge T>-\infty.$$ Moreover, if $v_j\in PSH^-(\Omega)$ such that $(v_j)_j$ decreases to $u$ then $$ \int_{\Omega} v_j (dd^cv_j)^q\wedge T\hbox{  converges to }\int_{\Omega} u (dd^cu)^q\wedge T .$$
\end{theo}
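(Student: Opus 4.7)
The plan is to treat the two assertions in succession, relying on the weak convergence from Theorem~\ref{th 3} and on the H\"older-type estimate of Theorem~\ref{th 1}.

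For the first assertion, I would fix a defining sequence $(\varphi_j)_j\subset\mathcal{E}_0^T(\Omega)$ for $u$ with $M:=\sup_j\int_\Omega(-\varphi_j)(dd^c\varphi_j)^q\wedge T<+\infty$. By Theorem~\ref{th 3}, $(dd^c\varphi_j)^q\wedge T\to(dd^c u)^q\wedge T$ weakly. For fixed $k$, the function $-\varphi_k$ is nonnegative, bounded and lower semicontinuous, hence it is the pointwise supremum of an increasing sequence of compactly supported continuous functions on $\Omega$; weak convergence then yields
$$\int_\Omega(-\varphi_k)(dd^c u)^q\wedge T \leq \liminf_{j\to\infty}\int_\Omega(-\varphi_k)(dd^c\varphi_j)^q\wedge T.$$
For $j\geq k$ one has $\varphi_j\leq\varphi_k$, so $-\varphi_k\leq-\varphi_j$ and the right-hand side is bounded by $M$. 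Letting $k\to+\infty$ and applying monotone convergence to $-\varphi_k\nearrow-u$ gives $\int_\Omega(-u)(dd^c u)^q\wedge T\leq M<+\infty$.

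For the second assertion, given $v_j\in PSH^-(\Omega)$ decreasing to $u$, I would first check that $v_j\in\mathcal{E}_1^T(\Omega)$ so that $\int_\Omega v_j(dd^c v_j)^q\wedge T$ is meaningful. Setting $w_{k,j}:=\max(\varphi_k,v_j)$, one has $w_{k,j}\in\mathcal{E}_0^T(\Omega)$ (since $\varphi_k$ is bounded and tends to $0$ on $\partial\Omega\cap Supp\,T$, while $v_j\leq0$), with $w_{k,j}\searrow v_j$ as $k\to+\infty$. The Theorem~\ref{th 1}-computation used in the proof of Proposition~\ref{prop 1} bounds $\int_\Omega(-w_{k,j})(dd^c w_{k,j})^q\wedge T$ by a constant times $\int_\Omega(-\varphi_k)(dd^c\varphi_k)^q\wedge T$, uniformly in $k$, placing $v_j$ in $\mathcal{E}_1^T(\Omega)$; Theorem~\ref{th 3} then gives $(dd^c v_j)^q\wedge T\to(dd^c u)^q\wedge T$ weakly. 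The inequality
$$\int_\Omega(-u)(dd^c u)^q\wedge T\leq\liminf_j\int_\Omega(-v_j)(dd^c v_j)^q\wedge T$$
follows by applying the lsc/weak-convergence argument of the first assertion, using $-v_k$ in place of $-\varphi_k$ and the fact that $-v_k\leq-v_j$ for $j\geq k$.

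For the reverse inequality, I would start from $-v_j\leq-u$, which yields $\int_\Omega(-v_j)(dd^c v_j)^q\wedge T\leq\int_\Omega(-u)(dd^c v_j)^q\wedge T$, and reduce matters to showing
$$\lim_{j\to\infty}\int_\Omega(-u)(dd^c v_j)^q\wedge T = \int_\Omega(-u)(dd^c u)^q\wedge T.$$
The idea is to split $-u$ into the bounded truncation $\min(-u,N)$, for which weak convergence against $(dd^c v_j)^q\wedge T$ passes to the limit, and a tail over $\{-u>N\}$ to be controlled uniformly in $j$ by Theorem~\ref{th 1} applied to the pair $(u,v_j)\in\mathcal{E}_1^T(\Omega)$, together with the uniform energy bound established above. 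The main obstacle is precisely this uniform tail control: one must show that $\int_{\{-u>N\}}(-u)(dd^c v_j)^q\wedge T\to 0$ as $N\to+\infty$ uniformly in $j$, which is where the $\mathcal{E}_1^T$-energy bound, combined with a careful use of Theorem~\ref{th 1}, is indispensable.
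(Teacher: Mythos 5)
Your treatment of the first assertion is correct, and in fact more economical than the paper's: lower semicontinuity of the bounded function $-\varphi_k$, the monotonicity bound $\int_\Omega(-\varphi_k)(dd^c\varphi_j)^q\wedge T\leq\int_\Omega(-\varphi_j)(dd^c\varphi_j)^q\wedge T\leq M$ for $j\geq k$, and monotone convergence do combine to give $\int_\Omega(-u)(dd^cu)^q\wedge T\leq M$, whereas the paper reaches the finiteness only as a by-product of the full limit statement (\ref{eq 2.1}).

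The second assertion, however, contains genuine gaps. First, Theorem \ref{th 3} does not give you weak convergence of $(dd^cv_j)^q\wedge T$ to $(dd^cu)^q\wedge T$: it is stated only for sequences taken from $\mathcal{E}_0^T(\Omega)$ decreasing to $u$ ``as in Definition \ref{defn1}'', while your $v_j$ are arbitrary negative psh functions, possibly unbounded and with no boundary behaviour; continuity of the operator along such sequences is of essentially the same depth as the statement being proved, so invoking it here is close to circular. Second, even granting that convergence, your splitting $-u=\min(-u,N)+(-u-N)^{+}$ does not close: $\min(-u,N)$ is only lower semicontinuous, so weak convergence yields $\liminf_j\int\min(-u,N)\,(dd^cv_j)^q\wedge T\geq\int\min(-u,N)\,(dd^cu)^q\wedge T$, which is the opposite of the $\limsup$ inequality you need for the upper bound on $\limsup_j\int(-v_j)(dd^cv_j)^q\wedge T$; and the uniform tail estimate on $\{-u>N\}$, which you yourself flag as the main obstacle, is never carried out --- note that Theorem \ref{th 1} is stated for pairs in $\mathcal{E}_0^T(\Omega)$, and extending it to $(u,v_j)$ presupposes exactly the approximation results in question. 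The paper avoids all of this with the double sequence $\max(u_j,v_k)\in\mathcal{E}_0^T(\Omega)$: since $(\max(u_j,v_k))_j$ decreases to $v_k$ and $(\max(u_j,v_k))_k$ decreases to $u_j$, the convergence already established for defining sequences (Equality (\ref{eq 2.1})) applies in each index separately and sandwiches $\int_\Omega v_k(dd^cv_k)^q\wedge T$ between quantities converging to $\int_\Omega u(dd^cu)^q\wedge T$, without ever integrating an unbounded function against the measures $(dd^cv_k)^q\wedge T$. You should replace the last part of your argument by a sandwiching of this type.
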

\begin{proof}
 Since $u\in \mathcal{E}_1^{T}(\Omega)$ then  there exists a sequence $(u_j)_j\subset \mathcal{E}_0^{T}$ such that $$ \lim_{j\to +\infty}u_j=u\ \  and \ \ \alpha:=\sup_j \int -u_j (dd^c u_j)^q\wedge T<+\infty.$$

Let us prove that
$$\lim_{j\rightarrow +\infty}\int_\Omega u_j(dd^cu_j)^q\wedge T=\int_\Omega u(dd^cu)^q\wedge T.$$
For every $k\geq j$ and $\varepsilon >0$, one has
$$\begin{array}{l}
 \ds\int_{\Omega} -u_j(dd^cu_j)^q\wedge T\\
 \leq\ds \int_{\Omega} -u_j(dd^cu_k)^q\wedge T\\
=\ds\int_{\{u_j\geq -\varepsilon\}} -u_j(dd^cu_k)^q\wedge T +\int_{\{u_j< -\varepsilon\}}-u_j(dd^cu_k)^q\wedge T
\end{array}$$
and $$\begin{array}{l}
 \ds\int_{\{u_j\geq -\varepsilon\}} -u_j(dd^cu_k)^q\wedge T \\
=\ds\int_{\{u_j\geq -\varepsilon\}} -\max(u_j,-\varepsilon)(dd^cu_k)^q\wedge T\\
\leq\ds \left(\int_\Omega -\max(u_j,-\varepsilon)(dd^c\max(u_j,-\varepsilon))^q\wedge T\right)^{\frac1{q+1}}\left(\int_\Omega -u_k(dd^cu_k)^q\wedge T\right)^{\frac q{q+1}}\\
\leq \ds \left(\varepsilon \int_\Omega (dd^cu_j)^q\wedge T\right)^{\frac 1{q+1}}\alpha^{\frac q{q+1}}
\end{array}
$$
This goes to $0$ when $\varepsilon\rightarrow 0$. By Theorem \ref{th 3} we obtain
$$\limsup_{k\to +\infty}\int_{\{u_j< -\varepsilon\}} -u_j(dd^cu_k)^q\wedge T\leq \int_{\Omega}-u_j (dd^cu)^q\wedge T .$$
Now since $-u_j$ is lower semi-continuous then
$$\liminf_{k\to +\infty}\int_{\Omega} -u_j(dd^cu_k)^q\wedge T\geq \int_{\Omega}-u_j (dd^cu)^q\wedge T .$$
Hence for all $j$,
$$\lim_{k\to +\infty}\int_{\Omega} u_j(dd^cu_k)^q\wedge T= \int_{\Omega} u_j(dd^cu)^q\wedge T.$$
It follows that
$$\begin{array}{l}
 \ds\lim_{j\to +\infty}\int_{\Omega} u_j(dd^cu_j)^q\wedge T\\
\geq \ds\lim_{j\to +\infty} \lim_{k\to+\infty}\int_{\Omega} u_j(dd^cu_k)^q\wedge T = \int_{\Omega} u(dd^cu)^q\wedge T\\
\geq \ds\limsup_{k\to +\infty}\int_{\Omega} u(dd^cu_k)^q\wedge T = \limsup_{k\to +\infty}\lim_{j\to +\infty}\int_{\Omega} u_j(dd^cu_k)^q\wedge T\\
\geq \ds\lim_{j\to +\infty}\int_{\Omega} u_j(dd^cu_j)^q\wedge T.
\end{array}
 $$
Thus
\begin{equation}\label{eq 2.1}
    \lim_{j\rightarrow +\infty}\int_{\Omega} u_j(dd^cu_j)^q\wedge T= \int_{\Omega}  u(dd^cu)^q\wedge T.
\end{equation}
As $(v_k)_k$ decreases to $u$ then $v_k \in \mathcal{E}_1^T(\Omega)$. It follows that
        \begin{equation}\label{eq 2.2}
            \int_{\Omega} \max(u_j,v_k)(dd^c\max(u_j,v_k))^q\wedge T\geq \int_{\Omega} u_j (dd^cu_j)^q\wedge T\geq -\alpha.
        \end{equation}
        Moreover, $(\max(u_j,v_k))_{j\in\Bbb N}\subset\mathcal E_0^T(\Omega)$ and decreases to $v_k$ so thanks to Equality (\ref{eq 2.1}),
        \begin{equation}\label{eq 2.3}
            \lim_{j\to+\infty}\int_{\Omega} \max(u_j,v_k)(dd^c\max(u_j,v_k))^q\wedge T =\int_{\Omega}  v_k(dd^cv_k)^q\wedge T.
        \end{equation}
        By tending $j\to+\infty$, Inequality (\ref{eq 2.2}), Equalities (\ref{eq 2.1}) and (\ref{eq 2.3}) give
        $$\int_{\Omega}  v_k(dd^cv_k)^q\wedge T\geq \int_{\Omega}  u(dd^cu)^q\wedge T.$$
        Thus
        \begin{equation}\label{eq 2.4}
            \liminf_{k\to+\infty}\int_{\Omega}  v_k(dd^cv_k)^q\wedge T\geq \int_{\Omega}  u(dd^cu)^q\wedge T.
        \end{equation}
        With the same reason, as $(\max(u_j,v_k))_{k\in\Bbb N}$ decreases to $u_j$ then
        $$\int_{\Omega}  u_j(dd^cu_j)^q\wedge T\geq \limsup_{k\to+\infty}\int_{\Omega}  v_k(dd^cv_k)^q\wedge T.$$
        Hence
        \begin{equation}\label{eq 2.5}
            \limsup_{k\to+\infty}\int_{\Omega}  v_k(dd^cv_k)^q\wedge T\leq \int_{\Omega}  u(dd^cu)^q\wedge T.
        \end{equation}
        The result follows from Inequalities (\ref{eq 2.4}) and (\ref{eq 2.5}).
    \end{proof}
    \begin{rem}
        Claim that if $u\in\mathcal E_1^T(\Omega)$ and  $(u_j)_j$ is a decreasing sequence  to $u$ as in Definition \ref{defn1} then
        $$ \int_{\Omega} u_j (dd^cu_j)^q\wedge T\hbox{  decreases to }\int_{\Omega} u (dd^cu)^q\wedge T .$$
    \end{rem}
\subsection{Comparaison theorems}
        We recall two classes $\mathcal{E}^{T}(\Omega)$ and $\mathcal{F}^{T}(\Omega)$ introduced in \cite{Ha-Du} where authors prove that  the Monge-Amp\`ere operator $(dd^c\centerdot)^q\wedge T$ is well defined on them.
    \begin{defn}\label{defn2}
        We say that $u\in \mathcal{F}^{T}(\Omega)$ if there exists a sequence $(u_j)_j\subset\mathcal{E}_0^{T}(\Omega)$ which decreases to $u$ such that $$\sup_j\int_\Omega (dd^cu_j)^q\wedge T< +\infty.$$
        A function $u$ will belong to $ \mathcal{E}^{T}(\Omega)$  if for all $z\in \Omega$ there exist a neighborhood $\omega$ of $z$ and a function   $v\in\mathcal{F}^{T}(\Omega)$  such that $u=v$ on $\omega$.
    \end{defn}
        As a consequence, for every $p\geq 1$ one has $\mathcal F_p^T(\Omega)\subset \mathcal F^T(\Omega)\subset\mathcal E^T(\Omega)$ but we dont know any relationship between $\mathcal E_p^T(\Omega)$ and $\mathcal E^T(\Omega)$.

    \begin{lem}
        Let $u,v\in PSH(\Omega)\cap L^{\infty}(\Omega)$ and $U$ be an open subset of $\Omega$ such that $u=v$ near $\partial U$. Then $$\int_U(dd^cu)^q\wedge T=\int_U(dd^cv)^q\wedge T$$
    \end{lem}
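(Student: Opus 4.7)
The plan is to reduce the equality to a statement about the difference current $dd^c(u-v)$ via the telescoping identity
$$(dd^c u)^q - (dd^c v)^q = dd^c(u-v)\wedge S, \qquad S := \sum_{k=0}^{q-1}(dd^c u)^k\wedge(dd^c v)^{q-1-k}.$$
Setting $w = u-v$ and wedging with $T$, it suffices to prove $\int_U dd^c w\wedge S\wedge T = 0$. Here $S\wedge T$ is a well-defined positive closed current of bidimension $(1,1)$ on $\Omega$, since $u,v$ are bounded psh (so $S$ is defined in the Bedford--Taylor sense) and $T$ is positive closed.

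Next I would use the hypothesis $u=v$ on an open neighborhood $V$ of $\partial U$ to localize. Assuming $U\Subset\Omega$ (which one may reduce to by exhaustion), the set $K := \overline{U}\setminus V$ is compact and contained in $U$, while $w\equiv 0$ on $V\cap U$. Choose $\chi\in\mathcal{D}(U)$ with $\chi\equiv 1$ on an open neighborhood $U_1$ of $K$. Two observations then follow: first, $(1-\chi)\,dd^c w\equiv 0$ on $U$, because $1-\chi$ vanishes on $U_1$ while $w$ (and hence $dd^c w$) vanishes on the open set $U\setminus U_1\subset V\cap U$; second, $w\,dd^c\chi\equiv 0$, because $dd^c\chi$ is supported in $\mathrm{supp}(\chi)\setminus U_1\subset U\setminus U_1$, where $w\equiv 0$.

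Combining the first observation with the compact support of $\chi$ in $U$,
$$\int_U dd^c w\wedge S\wedge T \;=\; \int_\Omega \chi\,dd^c w\wedge S\wedge T \;=\; \int_\Omega w\,dd^c\chi\wedge S\wedge T \;=\; 0,$$
where the middle equality is Bedford--Taylor integration by parts (valid because $S\wedge T$ is closed, $\chi$ is smooth and compactly supported, and $w$ is bounded), and the last equality is the second observation.

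The main technical point is the justification of the integration by parts in the nonsmooth setting. This is handled by the classical regularization procedure: approximate $u$ and $v$ by decreasing sequences of smooth psh functions on slightly shrunken subdomains, apply Stokes' theorem in the smooth case, and pass to the limit using the continuity of Bedford--Taylor products along decreasing locally bounded psh sequences. Everything else in the argument (the telescoping identity, the cutoff construction, and the two vanishing observations) is purely algebraic or set-theoretic once $S\wedge T$ is known to exist.
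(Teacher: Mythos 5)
Your proof is correct, and it reaches the conclusion by a slightly different (and in fact cleaner) route than the paper. The paper mollifies $u$ and $v$, picks $U'\subset\subset U$ and $\chi\in\mathcal D(U')$ with $\chi\equiv1$ near $\{u_\varepsilon\neq v_\varepsilon\}$, swaps the factors $dd^cu_\varepsilon$ for $dd^cv_\varepsilon$ one at a time inside $\int u_\varepsilon\,dd^c\chi\wedge(\cdots)$, and then passes to the limit $\varepsilon\to0$ using weak convergence of $(dd^cu_\varepsilon)^q\wedge T$. You instead package all $q$ swaps at once through the telescoping identity $(dd^cu)^q-(dd^cv)^q=dd^c(u-v)\wedge S$ and perform a single integration by parts against the positive closed current $S\wedge T$; the common mechanism in both arguments is that $dd^c\chi$ is supported where $u=v$. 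Your version has the advantage of working directly with the Bedford--Taylor products: the identity $\int\chi\,dd^cw\wedge S\wedge T=\int w\,dd^c\chi\wedge S\wedge T$ is essentially definitional, since $dd^cu\wedge S\wedge T:=dd^c(u\,S\wedge T)$ for bounded psh $u$ and closed positive $S\wedge T$, so the closing appeal to regularization is actually superfluous rather than a needed technical step. Two small points of hygiene: the set $U\smallsetminus U_1$ need not be open (it is only contained in the open set $U\cap V$ where $w\equiv0$, which is what the argument really uses), and both your proof and the paper's implicitly assume that $\{u\neq v\}\cap U$ is relatively compact in $U$ so that the cutoff $\chi$ exists; neither issue affects the validity of your argument.
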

    \begin{proof}
        Let $u_{\varepsilon}$ and $v_{\varepsilon}$ be  the usual regularization of $u$ and $v$ respectively. Choose $U'\subset\subset U$ such that $u=v$ near $\partial U'$. If $\varepsilon>0$ is small enough, one has $u_{\varepsilon}=v_{\varepsilon}$ near $\partial U'$ and if we take $\chi\in \mathcal{D}(U')$ with $\chi=1$ near $\{u_{\varepsilon}\neq v_{\varepsilon}\}$ then $dd^c\chi=0$ on $\{u_{\varepsilon}\neq v_{\varepsilon}\}$. So
        $$\begin{array}{lcl}
            \ds\int_{\Omega}\chi (dd^cu_{\varepsilon})^q \wedge T&=&\ds\int_{\Omega} u_{\varepsilon} dd^c\chi\wedge (dd^cu_{\varepsilon})^{q-1} \wedge T\\
	       &=&\ds\int_{\Omega} v_{\varepsilon} dd^c\chi\wedge (dd^cu_{\varepsilon})^{q-1} \wedge T\\
	       &=&\ds\int_{\Omega}\chi (dd^cv_{\varepsilon})^q \wedge T.
        \end{array}$$
        Hence $$\int_{\Omega}\chi (dd^cu)^q \wedge T=\int_{\Omega}\chi (dd^cv)^q \wedge T.$$
        The result follows.
    \end{proof}
    \begin{cor}\label{cor 1}
        Let  $u,v\in\mathcal{F}^{T}(\Omega)$. Assume that there exists an open subset $U$ of $\Omega$ such that $u=v$ near $\partial U$. Then $$\int_U(dd^cu)^q\wedge T=\int_U(dd^cv)^q\wedge T.$$
    \end{cor}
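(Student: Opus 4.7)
The plan is to reduce the statement to the bounded-psh case handled by the preceding lemma via truncation, and then pass to the limit using weak convergence of the Monge--Amp\`ere measures associated with functions in $\mathcal{F}^T(\Omega)$. First I would fix an open set $V$ containing $\partial U$ on which $u=v$, and for each integer $k\ge 1$ set
$$u^{(k)}:=\max(u,-k),\qquad v^{(k)}:=\max(v,-k).$$
These are bounded psh functions on $\Omega$, and since the maximum is a pointwise operation, the equality $u=v$ on $V$ is preserved: $u^{(k)}=v^{(k)}$ on $V$, hence in a neighborhood of $\partial U$. The preceding lemma applied to the pair $(u^{(k)},v^{(k)})$ on $U$ therefore yields
$$\int_U(dd^cu^{(k)})^q\wedge T=\int_U(dd^cv^{(k)})^q\wedge T\qquad\text{for every }k\ge 1.$$

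Writing $\mu_u^{(k)}:=(dd^cu^{(k)})^q\wedge T$ and $\mu_v^{(k)}:=(dd^cv^{(k)})^q\wedge T$, I would next exploit locality of the Bedford--Taylor operator on bounded psh functions (itself a consequence of the preceding lemma, applied to small balls contained in $V$) to obtain $\mu_u^{(k)}=\mu_v^{(k)}$ on $V$. Combined with the displayed identity this forces $\mu_u^{(k)}(K)=\mu_v^{(k)}(K)$ on the compact set $K:=\overline{U}\setminus V$, which lies in $U$ because $\partial U\subset V$. Fixing a cutoff $\chi\in\mathcal{D}(\Omega)$ with $\mathrm{supp}\,\chi\subset U$ and $\chi\equiv 1$ on $K$, these two observations combine into
$$\int_\Omega\chi\,d\mu_u^{(k)}=\int_\Omega\chi\,d\mu_v^{(k)}\qquad\text{for every }k\ge 1,$$
since the $U\cap V$ contributions of the two measures agree by locality and the $K$ contributions agree by the lemma.

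To finish, I would let $k\to+\infty$. Starting from a defining sequence $(u_j)_j\subset\mathcal{E}_0^T(\Omega)$ for $u\in\mathcal{F}^T(\Omega)$, the truncated sequence $(\max(u_j,-k))_j$ is still in $\mathcal{E}_0^T(\Omega)$ and decreases to $u^{(k)}$; Theorem \ref{th 1} bounds its total mass uniformly in $j$, so the $\mathcal{F}^T(\Omega)$-analogue of Theorem \ref{th 3} from \cite{Ha-Du} gives $\mu_u^{(k)}\to (dd^cu)^q\wedge T$ and $\mu_v^{(k)}\to (dd^cv)^q\wedge T$ weakly on $\Omega$. Passing to the limit in the cutoff identity and applying the same locality argument to the limit measures on $V$ to cancel the $U\cap V$ contributions yields $(dd^cu)^q\wedge T(K)=(dd^cv)^q\wedge T(K)$, and adding back the agreeing $U\cap V$ masses gives the desired equality on $U$. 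The main obstacle I expect is this bookkeeping between $K$ and $U\cap V$: since the cutoff $\chi$ is not equal to $\mathbf{1}_U$, recovering $\int_U(dd^c\cdot)^q\wedge T$ from $\int\chi(dd^c\cdot)^q\wedge T$ requires using locality of the Monge--Amp\`ere operator on $V$ both before and after passing to the limit, and verifying that the weak-limit definition of $(dd^cu^{(k)})^q\wedge T$ from \cite{Ha-Du} is available for the truncations $u^{(k)}$ themselves.
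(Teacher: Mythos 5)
Your overall strategy is the same as the paper's: replace $u,v$ by bounded psh functions that still agree near $\partial U$, invoke the preceding lemma, and pass to the limit. The paper, however, uses the approximants $u_j:=\max(u,jw)$ and $v_j:=\max(v,jw)$ with $w\in\mathcal{E}_0^T(\Omega)$ nowhere vanishing, precisely so that $u_j,v_j\in\mathcal{E}_0^T(\Omega)$ and the weak convergence $(dd^cu_j)^q\wedge T\to(dd^cu)^q\wedge T$ is exactly the defining convergence statement for the operator on $\mathcal{F}^T(\Omega)$.

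The genuine gap in your version is the limit step $\mu_u^{(k)}\to(dd^cu)^q\wedge T$. The convergence theorem you appeal to (Theorem \ref{th 3} and its $\mathcal{F}^T$-analogue from \cite{Ha-Du}) applies to decreasing sequences lying in $\mathcal{E}_0^T(\Omega)$, but the truncations $u^{(k)}=\max(u,-k)$ are in general \emph{not} in $\mathcal{E}_0^T(\Omega)$: membership requires $\lim_{z\to\partial\Omega\cap \operatorname{Supp}T}\varphi(z)=0$, and a function $u\in\mathcal{F}^T(\Omega)$ need not tend to $0$ at the boundary (think of a convergent sum of Green-type functions with poles accumulating at $\partial\Omega$), so neither does $\max(u,-k)$. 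Your attempt to repair this by truncating a defining sequence $(u_j)_j$ does produce $u^{(k)}\in\mathcal{F}^T(\Omega)$ and identifies $(dd^cu^{(k)})^q\wedge T$ as the $j\to\infty$ limit of $(dd^c\max(u_j,-k))^q\wedge T$, but it says nothing about the $k\to\infty$ limit of $(dd^cu^{(k)})^q\wedge T$; interchanging the two limits is exactly the point that needs proof and is not supplied. The fix is either to prove this truncation-convergence separately (a nontrivial diagonal argument) or, more economically, to switch to the paper's approximants $\max(u,jw)$, which agree near $\partial U$ for the same pointwise reason and for which the required weak convergence holds by definition. On the other hand, your cutoff-plus-locality bookkeeping for converting weak convergence of measures into convergence of their masses on the open set $U$ is sound and is actually more careful than the paper, which passes over this point in silence.
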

    \begin{proof}
        Let $u,v\in\mathcal F^{T}(\Omega)$ and $w\in \mathcal{E}_0^{T}(\Omega)$ such that $w(z) \neq0$ for all $z$. Then $u_j:=\max(u,jw)$ and $v_j=\max(v,jw)$ belong to $\mathcal{E}_0^{T}(\Omega)$ and they are equal on $\partial U$. The result follows from the previous lemma.
    \end{proof}
        Now we recall a result due to \cite{Ha-Du} and we give a different proof.
    \begin{prop}\label{prop 2}(See \cite{Ha-Du})
        For  $u,v\in\mathcal{F}^{T}(\Omega)$ such that $u\leq v$ on $\Omega$ one has
        $$\int_{\Omega}(dd^cv)^q\wedge T\leq\int_{\Omega}(dd^cu)^q\wedge T.$$
    \end{prop}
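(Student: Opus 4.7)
The plan is to prove the inequality in two stages: first for bounded functions in $\mathcal{E}_0^T(\Omega)$, then extend to $\mathcal{F}^T(\Omega)$ by monotone approximation along defining sequences.

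\textbf{Stage 1 (bounded case).} Assume $u,v\in\mathcal{E}_0^T(\Omega)$ with $u\leq v$. For each $\varepsilon>0$ I introduce the truncation $w_\varepsilon:=\max(u,v-\varepsilon)$. Since both $u$ and $v$ tend to $0$ on $\partial\Omega\cap\mathrm{Supp}\,T$, the inequality $v-\varepsilon<u$ holds in some neighborhood of that boundary set, so $w_\varepsilon=u$ there and in particular $\{w_\varepsilon\neq u\}\Subset\Omega$. The Lemma preceding Corollary~\ref{cor 1} (applied with $U$ a neighborhood of $\{w_\varepsilon\neq u\}$ compactly contained in $\Omega$ on whose boundary the two functions agree) yields
$$\int_\Omega(dd^cw_\varepsilon)^q\wedge T=\int_\Omega(dd^cu)^q\wedge T.$$
As $\varepsilon\searrow 0$, the family $w_\varepsilon$ increases to $\max(u,v)=v$, so by the Bedford--Taylor type continuity for monotone sequences of bounded psh functions wedged with the positive closed current $T$ (\cite{Da-El}), the measures $(dd^cw_\varepsilon)^q\wedge T$ converge weakly to $(dd^cv)^q\wedge T$. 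Lower semicontinuity of total mass then produces $\int_\Omega(dd^cv)^q\wedge T\leq\int_\Omega(dd^cu)^q\wedge T$.

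\textbf{Stage 2 (general case).} Choose defining sequences $(u_j),(v_j)\subset\mathcal{E}_0^T(\Omega)$ for $u$ and $v$. Replacing $v_j$ by $\tilde v_j:=\max(u_j,v_j)\in\mathcal{E}_0^T(\Omega)$ (which still decreases to $\max(u,v)=v$), I may assume that $u_j\leq\tilde v_j$. Stage~1 applied to each pair $(u_j,\tilde v_j)$ gives
$$\int_\Omega(dd^c\tilde v_j)^q\wedge T\leq\int_\Omega(dd^cu_j)^q\wedge T,$$
which simultaneously shows that $(\tilde v_j)$ is a valid $\mathcal{F}^T$-defining sequence for $v$. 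It remains to let $j\to\infty$ on both sides of this inequality.

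\textbf{Main obstacle.} The delicate step is this passage to the limit: Theorem~\ref{th 3} supplies weak convergence of $(dd^cu_j)^q\wedge T\to(dd^cu)^q\wedge T$ (and similarly for $\tilde v_j$), but that convergence is a priori only against compactly supported test functions, whereas here total masses on $\Omega$ must pass to the limit. I would handle this exactly as in the classical Cegrell theory: use the fact that both sequences originate in $\mathcal{E}_0^T(\Omega)$ to show, via an exhaustion of $\Omega$ together with the Stokes-type computation underlying the previous Lemma, that no mass escapes towards $\partial\Omega\cap\mathrm{Supp}\,T$, so that $\int_\Omega(dd^cu_j)^q\wedge T\to\int_\Omega(dd^cu)^q\wedge T$ and similarly for $\tilde v_j$. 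Combining this with the uniform inequality from Stage~1 completes the proof.
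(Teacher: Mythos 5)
Your Stage 1 is essentially correct and is a legitimate alternative to the integration-by-parts argument for the bounded case (one small caveat: membership in $\mathcal{E}_0^T(\Omega)$ only controls the boundary behaviour on $\partial\Omega\cap \mathrm{Supp}\,T$, so $\{u<v-\varepsilon\}$ need only be relatively compact after intersecting with $\mathrm{Supp}\,T$; the Lemma preceding Corollary \ref{cor 1} still applies because every measure in sight is wedged with $T$, but this should be said). The genuine gap is Stage 2. The limit you relegate to the ``main obstacle'' paragraph, namely $\lim_{j}\int_\Omega(dd^cu_j)^q\wedge T\le\int_\Omega(dd^cu)^q\wedge T$, is not a routine tightness fact: for each fixed $j$ it is exactly Proposition \ref{prop 2} applied to the pair $u\le u_j$ with $u_j\in\mathcal{E}_0^T(\Omega)\subset\mathcal{F}^T(\Omega)$. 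Weak convergence from Theorem \ref{th 3} gives only the opposite inequality (lower semicontinuity of total mass on the open set $\Omega$), and an exhaustion by compacts merely reduces the claim to a uniform-in-$j$ bound on the mass of $(dd^cu_j)^q\wedge T$ near $\partial\Omega$ --- which is the same statement again. As written, the appeal to ``no mass escapes, as in classical Cegrell theory'' assumes the conclusion, so the argument is circular.

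The paper avoids this trap by never passing to the limit in the unweighted total masses. It proves $\int_\Omega-h\,(dd^cv_j)^q\wedge T\le\int_\Omega-h\,(dd^cu_j)^q\wedge T$ for a weight $h\in\mathcal{E}_0^T(\Omega)$ (integration by parts, legitimate because $-h$ vanishes on $\partial\Omega\cap\mathrm{Supp}\,T$), then lets $j\to\infty$ using Theorem \ref{th 3} together with the splitting $\Omega=\{h\le-\varepsilon\}\cup\{h>-\varepsilon\}$, the error on $\{h>-\varepsilon\}$ being at most $\varepsilon\sup_j\int_\Omega(dd^cu_j)^q\wedge T<\infty$ by the definition of $\mathcal{F}^T(\Omega)$. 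Only after the inequality $\int_\Omega-h\,(dd^cv)^q\wedge T\le\int_\Omega-h\,(dd^cu)^q\wedge T$ is established for every such $h$ is the weight removed, by letting $h\searrow-1$ and using monotone convergence on both sides. To repair your proof you must replace ``let $j\to\infty$ on both sides'' by such a weighted (or Xing-type) argument; Stage 1 alone cannot supply the missing convergence of masses.
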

    \begin{proof}
        Let $(u_j)_j$ and $(v_j)_j$ be the corresponding decreasing sequences to $u$ and $v$ respectively as in Definition \ref{defn2}. Replace $v_j$ by $\max(u_j,v_j)$ we can assume that $u_j\leq v_j$ for all $j\in\Bbb N$. For $h\in \mathcal E_0^T(\Omega)$ and $\varepsilon>0$ we have
        $$\begin{array}{lcl}
            \ds\int_{\Omega}-h(dd^cv_j)^q\wedge T&\leq&\ds\int_{\Omega}-h(dd^cu_j)^q\wedge T\\
            &\leq &\ds \int_{\Omega}-h(dd^cu)^q\wedge T+\limsup_{j\to +\infty}\int_{\{h>-\varepsilon\}}-h(dd^cu_j)^q\wedge T\\
            &\leq&\ds \int_{\Omega}-h(dd^cu)^q\wedge T+\varepsilon\limsup_{j\to +\infty}\int_{\Omega}(dd^cu_j)^q\wedge T.
        \end{array}$$
        By tending $\varepsilon$ to $0$ we obtain $$\int_{\Omega}-h(dd^cv)^q\wedge T\leq \int_{\Omega}-h(dd^cu)^q\wedge T$$
        The result follows by choosing $h$ decreases to $-1$.
    \end{proof}
    \begin{lem}\label{lem 2}
        Let $u\in\mathcal{F}^{T}(\Omega)$ then there exists a sequence  $ (u_j)_j\subset \mathcal{E}_{0}^{T}(\Omega)\cap {\mathcal{C}(\overline{\Omega})}$ that decreases to $u$.
    \end{lem}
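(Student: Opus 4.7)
The plan is to take the defining sequence $(v_j)_j\subset\mathcal{E}_0^T(\Omega)$ furnished by Definition \ref{defn2}, approximate each $v_j$ from above by a continuous psh function on $\overline\Omega$ lying in $\mathcal{E}_0^T(\Omega)$, and then extract a diagonal.

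Using the hyperconvexity of $\Omega$, I would first fix a continuous psh exhaustion $\psi\in\mathcal{E}_0^T(\Omega)\cap\mathcal{C}(\overline\Omega)$ with $\psi<0$ on $\Omega$ and $\psi=0$ on $\partial\Omega$. For each $j$, I would regularize $v_j$ by convolution, $v_j^{(\varepsilon)}:=v_j\ast\rho_\varepsilon$, which is smooth and psh on $\Omega_\varepsilon:=\{z\in\Omega: d(z,\partial\Omega)>\varepsilon\}$, uniformly bounded by $\|v_j\|_\infty$, and decreases to $v_j$ as $\varepsilon\searrow 0$. For sequences $\varepsilon_k\searrow 0$ and $A_k\nearrow+\infty$ to be chosen, I would then define the glued function
$$w_{j,k}(z):=\begin{cases}\max\bigl(v_j^{(\varepsilon_k)}(z)-\tfrac{1}{k},\,A_k\psi(z)\bigr)&\text{if }z\in\Omega_{\varepsilon_k},\\ A_k\psi(z)&\text{if }z\in\Omega\setminus\Omega_{\varepsilon_k}.\end{cases}$$
Since $|\psi|\to 0$ near $\partial\Omega$ while $v_j^{(\varepsilon_k)}-\tfrac{1}{k}$ stays uniformly negative, one can choose $\varepsilon_k,A_k$ so that $A_k\psi>v_j^{(\varepsilon_k)}-\tfrac{1}{k}$ on a neighborhood (in $\Omega_{\varepsilon_k}$) of $\partial\Omega_{\varepsilon_k}$, which makes $w_{j,k}$ coincide with $A_k\psi$ across the seam. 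Hence $w_{j,k}\in PSH^-(\Omega)\cap\mathcal{C}(\overline\Omega)$, and Proposition \ref{prop 1} applied to $A_k\psi\in\mathcal{E}_0^T(\Omega)$ gives $w_{j,k}\in\mathcal{E}_0^T(\Omega)\cap\mathcal{C}(\overline\Omega)$.

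On any compact $K\subset\Omega$, $A_k\psi\to-\infty$ uniformly on $K$ as $A_k\to+\infty$, so $w_{j,k}=v_j^{(\varepsilon_k)}-\tfrac{1}{k}$ on $K$ for all large $k$, and therefore $w_{j,k}\searrow v_j$ pointwise. A further tuning of $(\varepsilon_k,A_k)$ ensures that $(w_{j,k})_k$ is monotone decreasing. Finally, a diagonal extraction $u_j:=w_{j,k_j}$, with $k_j$ chosen inductively so that $u_j\leq u_{j-1}$ on $\Omega$ and $u_j\leq v_j+\tfrac{1}{j}$, produces the required decreasing sequence in $\mathcal{E}_0^T(\Omega)\cap\mathcal{C}(\overline\Omega)$ converging to $u$.

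The main obstacle will be the simultaneous fine-tuning of $\varepsilon_k$ and $A_k$ so as to guarantee the gluing inequality near $\partial\Omega_{\varepsilon_k}$, the pointwise convergence $w_{j,k}\searrow v_j$, and the monotonicity of the final diagonal sequence. A secondary technical point is producing the exhaustion $\psi$ inside $\mathcal{E}_0^T(\Omega)$, which requires controlling $\int_\Omega(dd^c\psi)^q\wedge T$; this can typically be arranged by composing a standard bounded hyperconvex exhaustion with a suitable convex cutoff.
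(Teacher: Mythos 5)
Your plan diverges from the paper's proof, which does not reconstruct the continuous approximants at all: it quotes Cegrell \cite[Th.~2.1]{Ce2} for the existence of a decreasing sequence in $\mathcal{E}_0(\Omega)\cap\mathcal{C}(\overline\Omega)$ and observes that the only point left to check is $\int_\Omega(dd^cu_j)^q\wedge T<+\infty$, which follows from Proposition \ref{prop 2} applied to $u_j\geq u$ (and in fact yields the uniform bound $\int_\Omega(dd^cu_j)^q\wedge T\leq\int_\Omega(dd^cu)^q\wedge T$). Re-deriving Cegrell's approximation by convolution and gluing is where your argument has a genuine gap, namely the monotonicity you defer to ``further tuning''. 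The shift $-\tfrac1k$ is forced on you by the gluing (near $\partial\Omega_{\varepsilon_k}$ one cannot in general arrange $A_k\psi>v_j^{(\varepsilon_k)}$, since both are negative and their rates of decay at $\partial\Omega$ need not be comparable), but it is incompatible with producing a decreasing sequence: $v_j^{(\varepsilon_{k+1})}-\delta_{k+1}\leq v_j^{(\varepsilon_k)}-\delta_k$ with $\delta_k\searrow0$ requires the strict gap $v_j^{(\varepsilon_{k+1})}\leq v_j^{(\varepsilon_k)}-(\delta_k-\delta_{k+1})$, which fails wherever $v_j$ coincides with its regularizations (for instance on any open set where $v_j$ is pluriharmonic). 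The same obstruction defeats the diagonal step: $u_j\leq u_{j-1}$ forces $v_j^{(\varepsilon_j)}\leq u_{j-1}+\delta_j$, while all you know is $v_j\leq v_{j-1}\leq u_{j-1}+\delta_{j-1}$; at points where $v_j$ touches $u_{j-1}+\delta_{j-1}$ you are pushed into $\delta_j\geq\delta_{j-1}$, so $\delta_j\not\to0$ and the limit is no longer $u$. This is precisely the difficulty that Cegrell's proof circumvents by using Perron--Bremermann upper envelopes over an exhaustion of $\Omega$, which are automatically monotone in their parameters, instead of convolutions; citing \cite[Th.~2.1]{Ce2}, as the paper does, is the efficient way out.

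Two secondary points. First, Proposition \ref{prop 1} does not apply as you invoke it: it concerns the classes $\mathcal{E}_p^T(\Omega)$ and a global maximum $\max(u,v)$ with $v$ psh on all of $\Omega$, whereas your $w_{j,k}$ is a maximum only on $\Omega_{\varepsilon_k}$. The finiteness of $\int_\Omega(dd^cw_{j,k})^q\wedge T$ should instead be deduced from the comparison principle for bounded psh functions of \cite{Da-El}, using $w_{j,k}\geq A_k\psi$ and the common boundary behaviour; note that this only gives the bound $A_k^q\int_\Omega(dd^c\psi)^q\wedge T$, which is not uniform in $k$, whereas the uniform mass bound obtained through Proposition \ref{prop 2} is the actual new content of the lemma (it is the point the paper says was missing in \cite{Ha-Du}). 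Second, you are right that a continuous exhaustion $\psi\in\mathcal{E}_0^T(\Omega)$ must be produced; the paper uses such a function implicitly elsewhere, so this is a fair thing to assume, but it is not merely a ``cutoff'' remark and deserves the same justification as the mass bound.
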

        We claim that this lemma was cited in \cite[th.5.1]{Ha-Du} with uncompleted proof; in fact authors had used a comparaison theorem, proved by Dabbek-Elkhadhra \cite{Da-El} only for bounded psh functions,  in $\mathcal F^T(\Omega)$ where  functions are not in general bounded.
    \begin{proof}
        We refer to Cegrell \cite[Th.2.1]{Ce2} for the construction of the sequence $(u_j)_j$. It remains to show that $$\ds \int_{\Omega}(dd^cu_j)^q\wedge T <\infty .$$
        As $u_j\geq u$ then by Proposition \ref{prop 2} one has
        $$\int_{\Omega}(dd^cu_j)^q\wedge T \leq \int_{\Omega}(dd^cu)^q\wedge T<+\infty.$$
    \end{proof}
\section{$C_T$-quasicontinuity}
        Now we establish the quasicontinuity of psh functions belong to  $\mathcal{F}^{T}(\Omega)$ and $\mathcal E_p^{T}(\Omega)$. We need to recall some notions given in \cite{Da-El} (see also \cite{Ko}) about the capacity associated to $T$ which is defined as
        $$C_T(K,\Omega)=C_T(K)=\sup \left\{\int_K(dd^cv)^q\wedge T,\  v\in PSH(\Omega,[-1,0])\right\}.$$
        for all compact subset $K$ of $\Omega$. If $E$  is a subset of $\Omega$, we define
        $$C_T(E,\Omega)=\sup \{C_T(K),\ K\hbox{ compact subset of }E\}.$$  We refer to \cite{Da-El,Ko} for the properties of this capacity.
    \begin{defn}\
        \begin{itemize}
            \item A subset $A$ of $\Omega$ is said to be $T$-pluripolar if $C_T(A,\Omega)=0$.
            \item A psh  function $u$ is said to be quasicontinuous with respect to $C_T$, if for every $\varepsilon>0$, there exists an open subset $O_{\varepsilon}$ such that $C_T(O_{\varepsilon},\Omega)<\varepsilon$ and $u$ is continuous on $\Omega\smallsetminus O_{\varepsilon}$.
        \end{itemize}
    \end{defn}
    \begin{prop}\label{prop 3}
        Let $u\in\mathcal F^T(\Omega)$. Then for every $s>0$ one has
        $$s^q C_T(\{u\leq -s\},\Omega)\leq \int_\Omega (dd^cu)^q\wedge T.$$
        In particular, the  set $\{u=-\infty\}$ is $T$-pluripolar.
    \end{prop}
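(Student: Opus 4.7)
The plan is to reduce the inequality, via the very definition of $C_T$, to the following pointwise estimate: for every compact $K\subset\{u\leq -s\}$ and every test function $v\in PSH(\Omega,[-1,0])$,
$$s^q\int_K(dd^cv)^q\wedge T\;\leq\;\int_\Omega(dd^cu)^q\wedge T.$$
Taking the supremum first over $v$ and then over $K$ will yield the claimed capacity bound.

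The key idea is to introduce the auxiliary function $w:=\max(u,sv)\in PSH^-(\Omega)$. If $(u_j)_j\subset\mathcal{E}_0^T(\Omega)$ is an approximating sequence for $u$ as in Definition \ref{defn2}, then $\max(u_j,sv)\in\mathcal{E}_0^T(\Omega)$ decreases to $w$, and Proposition \ref{prop 2} applied to $u_j\leq\max(u_j,sv)$ within $\mathcal{F}^T(\Omega)$ yields
$$\int_\Omega(dd^c\max(u_j,sv))^q\wedge T\leq\int_\Omega(dd^cu_j)^q\wedge T,$$
a uniformly bounded quantity. Hence $w\in\mathcal{F}^T(\Omega)$, and a further application of Proposition \ref{prop 2} to $u\leq w$ gives $\int_\Omega(dd^cw)^q\wedge T\leq\int_\Omega(dd^cu)^q\wedge T$.

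The decisive geometric observation is that $\{u<-s\}$ is open (by upper semicontinuity of $u$) and on this set $sv\geq -s>u$, whence $w\equiv sv$ there; by locality of the Monge-Amp\`ere operator, $(dd^cw)^q\wedge T=s^q(dd^cv)^q\wedge T$ as measures on $\{u<-s\}$. Combining with the previous inequality,
$$s^q\int_{\{u<-s\}}(dd^cv)^q\wedge T\leq\int_\Omega(dd^cu)^q\wedge T.$$
To pass from the strict to the non-strict sublevel set, given a compact $K\subset\{u\leq -s\}$ and any $s'\in(0,s)$ one has $K\subset\{u<-s'\}$; applying the above with $s'$ in place of $s$ and letting $s'\to s^-$ recovers $s^q\int_K(dd^cv)^q\wedge T\leq\int_\Omega(dd^cu)^q\wedge T$, which completes the main inequality.

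For the pluripolarity of $\{u=-\infty\}$, the inclusion $\{u=-\infty\}\subset\{u\leq -s\}$ combined with the just-proved bound yields $C_T(\{u=-\infty\},\Omega)\leq s^{-q}\int_\Omega(dd^cu)^q\wedge T$; the right-hand side tends to $0$ as $s\to+\infty$ since $\int_\Omega(dd^cu)^q\wedge T$ is finite (lower semicontinuity of total mass under weak convergence of $(dd^cu_j)^q\wedge T$, together with the defining uniform bound in Definition \ref{defn2}). The only mildly delicate point in the argument is verifying $w\in\mathcal{F}^T(\Omega)$; everything else is routine bookkeeping around the open set $\{u<-s\}$.
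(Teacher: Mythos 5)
Your argument is correct, but it follows a genuinely different route from the paper's. The paper fixes the approximants $u_j\in\mathcal E_0^T(\Omega)$ and applies the comparison principle for \emph{bounded} psh functions (from Dabbek--Elkhadhra) to the pair $s^{-1}u_j$ and $v$, obtaining $\int_K(dd^cv)^q\wedge T\leq\int_{\{s^{-1}u_j<v\}}(dd^cv)^q\wedge T\leq s^{-q}\int_{\Omega}(dd^cu_j)^q\wedge T$ for $K\subset\{u_j\leq -s\}$, and then lets $j\to+\infty$. You instead avoid the comparison principle entirely: you form $w=\max(u,sv)$, use the monotonicity of the total Monge--Amp\`ere mass (Proposition \ref{prop 2}) twice --- once on the approximants to certify $w\in\mathcal F^T(\Omega)$, once on $u\leq w$ --- and then exploit the locality of the operator on the open set $\{u<-s\}$, where $w\equiv sv$. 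Both are sound. Your version has the advantage of working directly with $u$ rather than with $u_j$, which sidesteps the slightly delicate identification of $\{u\leq-s\}$ with a limit of the sets $\{u_j\leq-s\}$ that the paper passes over quickly; the small price is that you must know that for the bounded function $w$ the $\mathcal F^T$-definition of $(dd^cw)^q\wedge T$ agrees with the Bedford--Taylor/Dabbek--Elkhadhra one and that the latter is local --- both standard consequences of continuity under decreasing sequences of locally bounded psh functions, but worth a sentence of justification. Your treatment of the passage from $\{u<-s'\}$ to $\{u\leq-s\}$ and of the $T$-pluripolarity of $\{u=-\infty\}$ (finiteness of $\int_\Omega(dd^cu)^q\wedge T$ via the uniform mass bound) is complete and matches what the paper needs.
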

    \begin{proof}
        Let $(u_j)_j\subset \mathcal E_0^T(\Omega)$ be a decreasing sequence to $u$ on $\Omega$ as in Definition \ref{defn2}. Take $s>0$, $v\in PSH(\Omega,[-1,0])$ and $K$ a compact subset in $\{u_j\leq -s\}$. Thanks to the comparaison principle (for bounded psh functions), we have
        $$\begin{array}{lcl}
            \ds\int_K (dd^cv)^q\wedge T&\leq&\ds \int_{\{s^{-1}u_j<v\}}(dd^cv)^q\wedge T\leq \frac1{s^q}\int_{\{s^{-1}u_j<v\}}(dd^cu_j)^q\wedge T\\
            &\leq &\ds\frac1{s^q}\int_\Omega(dd^cu_j)^q\wedge T
        \end{array}$$
        It follows that
        $$C_T(\{u_j\leq -s\},\Omega)\leq \frac1{s^q}\int_\Omega(dd^cu_j)^q\wedge T.$$
        By tending $j$ to infinity, we obtain
        $$C_T(\{u\leq -s\},\Omega)\leq \frac1{s^q}\int_\Omega(dd^cu)^q\wedge T.$$
    \end{proof}
    \begin{cor}\label{cor 2}
        Every $u\in\mathcal{F}^{T}(\Omega)$  is $C_T$-quasicontinuous.
    \end{cor}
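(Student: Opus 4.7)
The strategy is to reduce the unbounded case to the known $C_T$-quasicontinuity of bounded psh functions (available in \cite{Da-El}) by truncating $u$ at a large negative level, and to use the capacity estimate just established in Proposition \ref{prop 3} to control what is cut off.

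First I fix $\varepsilon>0$ and use Proposition \ref{prop 3}: since $u\in\mathcal F^T(\Omega)$ the mass $\int_\Omega(dd^cu)^q\wedge T$ is finite, so
$$C_T(\{u\leq -s\},\Omega)\;\leq\;\frac{1}{s^q}\int_\Omega (dd^cu)^q\wedge T\;\longrightarrow\;0$$
as $s\to+\infty$. Choose $s$ so large that this bound is $<\varepsilon/2$. Because $u$ is upper semicontinuous, the sublevel set $V_s:=\{u<-s\}$ is open and contained in $\{u\leq -s\}$, hence $C_T(V_s,\Omega)<\varepsilon/2$.

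Next I truncate: set $\varphi_s:=\max(u,-s)$. Then $\varphi_s\in PSH^-(\Omega)\cap L^\infty(\Omega)$ (and in fact $\varphi_s\in \mathcal F^T(\Omega)$ by Proposition \ref{prop 1}). The $C_T$-quasicontinuity of bounded psh functions, which is the exact analogue for our capacity of the classical Bedford--Taylor quasicontinuity and is part of the capacity theory developed in \cite{Da-El,Ko}, then provides an open set $U\subset\Omega$ with $C_T(U,\Omega)<\varepsilon/2$ and $\varphi_s$ continuous on $\Omega\setminus U$.

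Finally I set $O_\varepsilon:=U\cup V_s$, which is open. Subadditivity of $C_T$ on open sets gives $C_T(O_\varepsilon,\Omega)<\varepsilon$. On $\Omega\setminus O_\varepsilon$ we have $u\geq -s$, hence $u\equiv \varphi_s$ there, and since $\varphi_s$ is continuous on $\Omega\setminus U\supseteq \Omega\setminus O_\varepsilon$, so is $u$. This is the required $C_T$-quasicontinuity. The one nontrivial ingredient is the quasicontinuity of bounded psh functions relative to $C_T$, which we use as a black box from the references the authors are already citing for the properties of $C_T$; everything else is an immediate consequence of Proposition \ref{prop 3} and upper semicontinuity of $u$.
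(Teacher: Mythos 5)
Your proof is correct and follows essentially the same route as the paper: both choose $s$ via Proposition \ref{prop 3} so that the open sublevel set $\{u<-s\}$ has small capacity, truncate to $\max(u,-s)$, invoke the Dabbek--Elkhadhra quasicontinuity of bounded psh functions, and take the union of the two exceptional open sets. Your write-up is, if anything, slightly more explicit about the openness of $\{u<-s\}$ and the identification $u\equiv\max(u,-s)$ off the exceptional set.
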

    \begin{proof}
        Let $u\in\mathcal{F}^{T}(\Omega)$ and $\varepsilon>0$. Denote by $B_u(t):=\{z\in\Omega;\ u(z)<t\},\ t\leq 0$. By Proposition \ref{prop 3}, there is $s_\varepsilon\geq1$ such that $C_T(B_u(-s_\varepsilon),\Omega)<\frac\varepsilon2$. The function $u_\varepsilon:=\max(u,-s_\varepsilon)$ is bounded on $\Omega$ so thanks to Dabbek-Elkhadhra \cite{Da-El}, there is an open subset $\mathcal O$ in $\Omega$ such that $C_T(\mathcal O,\Omega)<\frac\varepsilon2$ and $u_\varepsilon$ is continuous on  $\Omega\smallsetminus \mathcal O$. The result follows by taking $\mathcal O_\varepsilon=\mathcal O\cup B_u(-s_\varepsilon)$.
    \end{proof}
        To study the $C_T$-quasicontinuity on $\mathcal E_p^T(\Omega)$, we will proceed as in the previous case.
    \begin{prop}\label{prop 4}
        Let $u\in\mathcal E_p^T(\Omega)$ and $(u_j)_j\subset \mathcal E_0^T(\Omega)$ decreases to $u$ on $\Omega$ as in Definition \ref{defn1}. Then for every $s>0$ one has
        $$s^{p+q} C_T(\{u\leq -2s\},\Omega)\leq \sup_{j\geq1}\int_\Omega (-u_j)^p(dd^cu_j)^q\wedge T.$$
        In particular, the  set $\{u=-\infty\}$ is $T$-pluripolar.
    \end{prop}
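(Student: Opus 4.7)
My plan is to mimic the proof of Proposition \ref{prop 3} while inserting the weight $(-u_j)^p$ into the comparison step, so that an extra factor $s^p$ appears. Fix $s>0$ and $v\in PSH(\Omega,[-1,0])$, pick some $j\geq 1$, and let $K$ be a compact subset of $\{u_j\leq -2s\}$. On $K$ one has $s^{-1}u_j+1\leq -1\leq v$, so (up to the equality set, handled as in the proof of Proposition \ref{prop 3}) $K$ is contained in the open set $\Omega_j:=\{s^{-1}u_j+1<v\}$. Since $u_j\in \mathcal{E}_0^T(\Omega)$ vanishes on $\partial\Omega\cap Supp\, T$, the function $s^{-1}u_j+1$ tends to $1$ there and strictly dominates $v\leq 0$, so the comparison principle for bounded psh functions against $T$ of \cite{Da-El} yields
\[
\int_K (dd^cv)^q\wedge T\leq \int_{\Omega_j}(dd^cv)^q\wedge T \leq \frac{1}{s^q}\int_{\Omega_j}(dd^cu_j)^q\wedge T.
\]

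On $\Omega_j$ one also has $u_j<s(v-1)\leq -s$, hence $(-u_j)^p\geq s^p$ pointwise, which produces the second ingredient
\[
\int_{\Omega_j}(dd^cu_j)^q\wedge T \leq \frac{1}{s^p}\int_\Omega (-u_j)^p(dd^cu_j)^q\wedge T.
\]
Combining the two estimates and taking the supremum over all admissible $v$ and $K$ gives, for every $j\geq 1$,
\[
s^{p+q}C_T(\{u_j\leq -2s\},\Omega)\leq \int_\Omega (-u_j)^p(dd^cu_j)^q\wedge T\leq \sup_{k\geq 1}\int_\Omega (-u_k)^p(dd^cu_k)^q\wedge T.
\]

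Finally I would let $j\to+\infty$. Since $u_j\searrow u$, the sets $\{u_j\leq -2s\}$ increase with $j$ and their union contains $\{u<-2s\}$, so by monotonicity of $C_T$ the above estimate passes to $s^{p+q}C_T(\{u<-2s\},\Omega)$. Applying this with $s$ replaced by $s-\varepsilon$ and then sending $\varepsilon\to 0^+$ upgrades it to the announced bound on $\{u\leq -2s\}$. The pluripolarity statement follows at once: the inclusion $\{u=-\infty\}\subset\{u\leq -2s\}$ valid for every $s>0$, together with the fact that the right-hand side of the main inequality is a fixed finite number by the very definition of $\mathcal E_p^T(\Omega)$, forces $C_T(\{u=-\infty\},\Omega)=0$ upon letting $s\to+\infty$.

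The main technical point I expect to require care is the justification of the comparison principle for the pair $(s^{-1}u_j+1,v)$ on $\Omega_j$, since $v$ is merely a negative psh function with values in $[-1,0]$ and need not vanish on $\partial\Omega\cap Supp\, T$; the hypothesis $u_j\in\mathcal{E}_0^T(\Omega)$ is precisely what makes the boundary value of $s^{-1}u_j+1$ dominate $v$ and thus allows the $T$-adapted comparison of \cite{Da-El} to apply.
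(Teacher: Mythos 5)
Your argument is correct and is essentially the paper's own proof: the same comparison of $u_j$ with $s(v-1)$ on the set $\{s^{-1}u_j+1<v\}$, the same pointwise bound $(-u_j)^p\geq s^p$ there to extract the extra factor $s^p$, and the same passage to the limit in $j$. Your treatment of the limit step (going through $\{u<-2s\}$ and then shifting $s$ by $\varepsilon$) is in fact slightly more careful than the paper's, which passes directly to $\{u\leq-2s\}$.
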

    \begin{proof}
        Let  $s>0$, $v\in PSH(\Omega,[-1,0])$. Thanks to comparaison principle (for bounded psh functions), we have
        $$\begin{array}{lcl}
            \ds\int_{\{u_j\leq -2s\}} (dd^cv)^q\wedge T&\leq&\ds \int_{\{u_j<-s+sv\}}(dd^cv)^q\wedge T\leq \frac1{s^q}\int_{\{s^{-1}u_j<-1+v\}}(dd^cu_j)^q\wedge T\\
            &\leq &\ds\frac1{s^{p+q}}\int_\Omega(-u_j)^p(dd^cu_j)^q\wedge T
        \end{array}$$
        It follows that
        $$C_T(\{u_j\leq -2s\},\Omega)\leq \frac1{s^{p+q}}\sup_{m\geq 1}\int_\Omega(-u_m)^p(dd^cu_m)^q\wedge T.$$
        By tending $j$ to infinity, we obtain
        $$C_T(\{u\leq -2s\},\Omega)\leq \frac1{s^{p+q}}\sup_{m\geq 1}\int_\Omega(-u_m)^p(dd^cu_m)^q\wedge T.$$
    \end{proof}
        By the same argument as in corollary \ref{cor 2}  we can easily deduce the following result:
    \begin{cor}
        Every function in $\mathcal E_p^T(\Omega)$ is $C_T$-quasicontinuous.
    \end{cor}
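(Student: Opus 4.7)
The plan is to mimic the proof of Corollary \ref{cor 2} verbatim, substituting Proposition \ref{prop 4} for Proposition \ref{prop 3} as the source of the capacity decay estimate. Fix $u\in\mathcal E_p^T(\Omega)$ and $\varepsilon>0$, and pick a decreasing sequence $(u_j)_j\subset\mathcal E_0^T(\Omega)$ with $u_j\searrow u$ as in Definition \ref{defn1}, so that $M:=\sup_j\int_\Omega(-u_j)^p(dd^cu_j)^q\wedge T<+\infty$.

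First, I would use Proposition \ref{prop 4} to control the sublevel sets of $u$: since
$$C_T(\{u\leq -2s\},\Omega)\leq \frac{M}{s^{p+q}}\longrightarrow 0\quad\hbox{as } s\to+\infty,$$
there exists $s_\varepsilon\geq 1$ such that $C_T(\{u\leq -2s_\varepsilon\},\Omega)<\varepsilon/2$. Since $u$ is psh, hence upper semi-continuous, the set $B_\varepsilon:=\{u<-2s_\varepsilon\}$ is open and contained in $\{u\leq -2s_\varepsilon\}$, so $C_T(B_\varepsilon,\Omega)<\varepsilon/2$.

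Next, I truncate: the function $u_\varepsilon:=\max(u,-2s_\varepsilon)$ is bounded and psh on $\Omega$, so by the quasicontinuity result of Dabbek-Elkhadhra \cite{Da-El} for bounded psh functions with respect to $C_T$, there exists an open set $\mathcal O\subset\Omega$ with $C_T(\mathcal O,\Omega)<\varepsilon/2$ such that $u_\varepsilon$ is continuous on $\Omega\smallsetminus\mathcal O$. Setting $\mathcal O_\varepsilon:=\mathcal O\cup B_\varepsilon$, this is an open subset of $\Omega$ with $C_T(\mathcal O_\varepsilon,\Omega)\leq C_T(\mathcal O,\Omega)+C_T(B_\varepsilon,\Omega)<\varepsilon$ (subadditivity of $C_T$, which is part of the standard properties recalled from \cite{Da-El,Ko}). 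On $\Omega\smallsetminus\mathcal O_\varepsilon$ we have $u\geq -2s_\varepsilon$, hence $u=u_\varepsilon$, and $u_\varepsilon$ is continuous there; therefore $u$ is continuous on $\Omega\smallsetminus\mathcal O_\varepsilon$, which proves the $C_T$-quasicontinuity.

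There is no real obstacle here: the only subtlety is remembering to replace the set $\{u\leq -2s_\varepsilon\}$ by the open set $\{u<-2s_\varepsilon\}$ (which is permissible because $u$ is upper semi-continuous and Proposition \ref{prop 4}'s bound applies a fortiori to the smaller set), so that $\mathcal O_\varepsilon$ is genuinely open as required by the definition of quasicontinuity. The rest is bookkeeping.
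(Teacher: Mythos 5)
Your proof is correct and is exactly what the paper intends: the paper dispatches this corollary with the single line ``by the same argument as in Corollary \ref{cor 2}'', and you have carried out that argument faithfully, replacing Proposition \ref{prop 3} by Proposition \ref{prop 4} for the sublevel-set capacity estimate and then truncating and invoking the Dabbek--Elkhadhra quasicontinuity of bounded psh functions. The remark about passing to the open sublevel set $\{u<-2s_\varepsilon\}$ is a sensible (and harmless) precaution that the paper's own proof of Corollary \ref{cor 2} already builds in via the notation $B_u(t)$.
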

        Now we need a first version of the comparaison principle where one of the functions will be unbounded. This result was proved in  \cite{Da-El} for bounded functions.
    \begin{theo}\label{th 5}
        Let $u\in \mathcal{F}^{T}(\Omega)$ and $v\in PSH(\Omega)\cap L^\infty(\Omega)$ such that
        $$\liminf_{z\to \partial \Omega\cap SuppT} u(z)-v(z)\geq 0.$$
        Then
        $$\int_{\{u<v\}}(dd^cv)^q\wedge T\leq \int_{\{u<v\}}(dd^cu)^q\wedge T.$$
    \end{theo}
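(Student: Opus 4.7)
The plan is the standard reduction to the bounded comparison principle: approximate $u$ by bounded continuous functions, apply the known result of \cite{Da-El}, and pass to the limit. By Lemma \ref{lem 2}, fix a decreasing sequence $(u_j)_j\subset\mathcal{E}_0^T(\Omega)\cap\mathcal{C}(\overline{\Omega})$ with $u_j\searrow u$. Continuity of $u_j$ and upper semi-continuity of $v$ make $u_j-v$ lower semi-continuous, so each $\{u_j<v\}$ is an \emph{open} set, and since $u\leq u_j$ the family is increasing in $j$ with union $\{u<v\}$. The boundary hypothesis transfers: $u_j\to 0$ on $\partial\Omega\cap\operatorname{Supp}T$ (from the definition of $\mathcal{E}_0^T$), and the assumption $\liminf(u-v)\geq 0$ on this boundary forces $v\leq 0$ there, so $\liminf(u_j-v)\geq 0$ on $\partial\Omega\cap\operatorname{Supp}T$.

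The Dabbek--Elkhadhra bounded comparison principle then applies to $(u_j,v)$ and gives, for every $j\geq 1$,
$$\int_{\{u_j<v\}}(dd^cv)^q\wedge T\leq \int_{\{u_j<v\}}(dd^cu_j)^q\wedge T.$$
For the left-hand side, monotone convergence of the fixed positive Radon measure $(dd^cv)^q\wedge T$ on the increasing sets $\{u_j<v\}\nearrow\{u<v\}$ gives $\int_{\{u_j<v\}}(dd^cv)^q\wedge T\nearrow\int_{\{u<v\}}(dd^cv)^q\wedge T$.

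The main obstacle is the right-hand side, where both the measure $(dd^cu_j)^q\wedge T$ (which by Theorem \ref{th 3} converges only \emph{weakly} to $(dd^cu)^q\wedge T$) and the integration domain $\{u_j<v\}$ depend on $j$, while the indicator $\mathbf 1_{\{u<v\}}$ is in general neither upper nor lower semi-continuous, so a direct appeal to weak convergence is not enough. My approach would be to introduce the auxiliary function $w_j:=\max(u_j,v)$, which is bounded psh and, since $v\leq 0$ on the relevant boundary, belongs to $\mathcal{E}_0^T(\Omega)$; by Proposition \ref{prop 2} it satisfies $\int_\Omega(dd^cw_j)^q\wedge T\leq\int_\Omega(dd^cu_j)^q\wedge T$, and locally it agrees with $v$ on the open set $\{u_j<v\}$ and with $u_j$ on the open set $\{u_j>v\}$, so the Monge--Amp\`ere mass decomposes along these two open pieces. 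Passing to the limit $j\to\infty$ in the resulting decomposition, and invoking Proposition \ref{prop 3} (that $\{u=-\infty\}$ is $T$-pluripolar, hence carries no mass for $(dd^cu)^q\wedge T$) together with weak-convergence estimates on the closed complements $\{u_j\geq v\}$ (which decrease to $\{u\geq v\}$, using $\limsup_j\mu_j(\{u_k\geq v\})\leq\mu(\{u_k\geq v\})$ at a fixed level $k$ and then letting $k\to\infty$ by monotone convergence), one obtains the upper bound $\limsup_j\int_{\{u_j<v\}}(dd^cu_j)^q\wedge T\leq\int_{\{u<v\}}(dd^cu)^q\wedge T$, which combined with the LHS limit completes the proof.
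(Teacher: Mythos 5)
Your first step is fine: Lemma \ref{lem 2} gives $u_j\in\mathcal E_0^T(\Omega)\cap\mathcal C(\overline\Omega)$ decreasing to $u$, the boundary hypothesis does transfer to the pairs $(u_j,v)$, the bounded comparison principle of Dabbek--Elkhadhra applies, and the left-hand side passes to the limit by monotone convergence on $\{u_j<v\}\nearrow\{u<v\}$. The gap is exactly where you locate it, on the right-hand side, and the device you propose to close it rests on a false topological claim. Since $u_j$ is continuous and $v$ is only upper semi-continuous, the function $u_j-v$ is \emph{lower} semi-continuous, so the sets that are open and closed are $\{u_j>v\}$ and $\{u_j\leq v\}$ respectively; the sets $\{u_j<v\}$ and $\{u_j\geq v\}$ are in general neither. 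This kills both halves of your plan: the local identification $(dd^c\max(u_j,v))^q\wedge T=(dd^cv)^q\wedge T$ is only valid on an \emph{open} set where the two functions agree, so it cannot be invoked on $\{u_j<v\}$; and the portmanteau inequalities you want ($\limsup_j\mu_j(F)\leq\mu(F)$ for closed $F$, $\liminf_j\mu_j(U)\geq\mu(U)$ for open $U$) are applied to sets with the wrong semi-continuity on both sides. This is precisely the obstruction the paper's proof is built to circumvent: it uses the $C_T$-quasicontinuity of $v$ to extract an open set $G$ with $C_T(G,\Omega)<\varepsilon$ off which $v$ is continuous, writes $v=\varphi+\psi$ with $\varphi$ continuous, works with the genuinely open set $U=\{u_k<\varphi\}$ (noting $U\cup G=\{u_k<v\}\cup G$), and absorbs all the errors on $G$ into terms of size $\varepsilon\|v\|_\infty^q$ via the capacity. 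Without some such quasicontinuity input, your limit passage on the right-hand side does not go through.

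Two secondary points. First, your parenthetical that $\{u=-\infty\}$ being $T$-pluripolar implies it carries no mass for $(dd^cu)^q\wedge T$ is false for $u\in\mathcal F^T(\Omega)$: Theorem \ref{th 6} identifies $\int_{\{u=-\infty\}}(dd^cu)^q\wedge T$ with the Lelong--Demailly number $\nu(T,u)$, which can be positive, and the extra set $\{u=v=-\infty\}$ in the paper's main result exists precisely for this reason (here it is harmless only because $v$ is bounded, so $\{u=-\infty\}\subset\{u<v\}$ anyway). Second, $\max(u_j,v)$ need not lie in $\mathcal E_0^T(\Omega)$ since $v$ is not assumed negative; this is minor but should be handled if you keep that auxiliary function.
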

    \begin{proof}
        Firstly we assume that $u$ and $v$ are continuous on a neighborhood $W$ of $Supp T$. Without loss of generality we can assume that $u<v$ on $W$ and $u=v$ on $\partial W$. Let $v_{\varepsilon}:=\max(u,v-\varepsilon)$ then one has $v_{\varepsilon}=u$ on $\partial W$ and
        $$\int_{\{u<v\}}(dd^cv_{\varepsilon})^q\wedge T= \int_{\{u<v\}}(dd^cu)^q\wedge T.$$
        Since the family of measures  $(dd^cv_{\varepsilon})^q\wedge T$ converges weakly to $(dd^cu)^q\wedge T$ as $\varepsilon\to 0$, then we obtain
        $$\int_{\{u<v\}}(dd^cv)^q\wedge T= \int_{\{u<v\}}(dd^cu)^q\wedge T.$$
        Let now treat the general cas. Replace $u$ by $u+\delta$ if necessary, we can assume that $\liminf(u-v)\geq 2\delta$; so there is an open subset $\mathcal{O}\subset \subset \Omega$ such that $u(z)\geq v(z)+\delta$ for all $z\in \Omega\smallsetminus \mathcal{O}$. Let $(u_k)_k$ and $(v_j)_j$ be two smooth sequences of psh functions which decrease respectively to $u$ and $v$ on a neighborhood of $\overline{\mathcal{O}}$ such that $u_k\geq v_j$ on $\partial \overline{\mathcal{O}}\cap Supp T$ for $j\geq k$. Using the previous argument we obtain
        $$\int_{\{u_k<v_j\}}(dd^cv_j)^q\wedge T= \int_{\{u_k<v_j\}}(dd^cu_k)^q\wedge T.$$
        For $\varepsilon > 0$, there exists an open subset $G$ of $\Omega$ such that $C_T(G,\Omega)<\varepsilon$ and $u,v$ are continuous on $\Omega\smallsetminus G$. We can write $v=\varphi + \psi$ where $\varphi$ is continuous on $\Omega$ and $\psi=0$ on $\Omega \smallsetminus G.$ Take $U:=\{u_k<\varphi\}$ then
        $$\int_U(dd^cv)^q\wedge T\leq \lim_{j\to +\infty}\int_U(dd^cv_j)^q\wedge T.$$
        Since $U\cup G=\{u_k<v\}\cup G$ then
        $$\begin{array}{l}
            \ds\int_{\{u_k<v\}}(dd^cv)^q\wedge T\\
            \ds\leq \int_U(dd^cv)^q\wedge T+\int_G(dd^cv)^q\wedge T\\
            \ds\leq \lim_{j\to +\infty}\int_U(dd^cv_j)^q\wedge T+\int_G(dd^cv)^q\wedge T\\
            \ds\leq \lim_{j\to +\infty}\left(\int_{\{u_k<v_j\}}(dd^cv_j)^q\wedge T+\int_G(dd^cv_j)^q\wedge T\right)+ \int_G(dd^cv)^q\wedge T\\
            \ds \leq \lim_{j\to +\infty}\int_{\{u_k<v_j\}}(dd^cv_j)^q\wedge T+2\varepsilon||v||_{\infty}^q\\
            \ds \leq \lim_{j\to +\infty}\int_{\{u_k<v_j\}}(dd^cu_k)^q\wedge T+2\varepsilon||v||_{\infty}^q.
        \end{array}$$
        Now as $\{u_k<v_j\}\downarrow{\{u_k\leq v\}}$, ${\{u_k<v\}}\uparrow {\{u<v\}}$ then
        $$\int_{\{u<v\}}(dd^cv)^q\wedge T\leq \lim_{k\to +\infty}\int_{\{u_k\leq v\}}(dd^cu_k)^q\wedge T+2\varepsilon||v||_{\infty}^q.$$
        The continuity of $u$ and $v$ on $\Omega\smallsetminus G$ gives that ${\{u\leq v\}}\smallsetminus G$ is a closed subset of $\Omega$. It follows that
        $$\int_{\{u\leq v\}\smallsetminus G}(dd^cu)^q\wedge T\geq \lim_{k\to +\infty}\int_{\{u\leq v\}\smallsetminus G}(dd^cu_k)^q\wedge T.$$
        Thus
        $$\begin{array}{lcl}
            \ds\int_{\{u\leq v\}}(dd^cu)^q\wedge T&\geq&\ds \int_{\{u\leq v\}\smallsetminus G}(dd^cu)^q\wedge T\\
            &\geq &\ds \lim_{k\to +\infty}\int_{\{u\leq v\}\smallsetminus G}(dd^cu_k)^q\wedge T\\
            &\geq&\ds  \lim_{k\to +\infty}\left(\int_{\{u_k<v\}}(dd^cu_k)^q\wedge T-\int_G(dd^cu_k)^q\wedge T\right)\\
            &\geq&\ds \lim_{k\to +\infty}\int_{\{u_k<v\}}(dd^cu_k)^q\wedge T-\varepsilon||v||_{\infty}^q.
        \end{array}$$
        So
        $$\int_{\{u<v\}}(dd^cv)^q\wedge T\leq\int_{\{u\leq v\}}(dd^cu)^q\wedge T+3\varepsilon||v||_{\infty}^q.$$
        By tending $\varepsilon$ to $0$, we obtain
        $$\int_{\{u<v\}}(dd^cv)^q\wedge T\leq\int_{\{u\leq v\}}(dd^cu)^q\wedge T$$
        As $\{u+\rho<v\}\uparrow \{u<v\}$ and $\{u+\rho\leq v\}\uparrow \{u<v\}$ when $\rho\searrow 0$ then the desired inequality follows by replacing $u$ by $u+\rho$.
    \end{proof}

    Recall that  the Lelong-Demailly number of $T$ with respect to a psh function $\varphi$ is defined as the limit $\nu(T,\varphi):=\lim_{t\to-\infty}\nu(T,\varphi,t)$ where
    $$\nu(T,\varphi,t)=\int_{B_\varphi(t)}T\wedge(dd^c\varphi)^q,\ t<0\ .$$
    The following result was proved in \cite{El} but author has used Stokes formula where  a regularity condition on $\varphi$ is required.
    \begin{theo}\label{th 6}
        Let $\varphi\in\mathcal F^T(\Omega)$ such that $e^\varphi$ is continuous on $\Omega$. Then for every $s,t>0$ one has
        \begin{equation}\label{eq 3.1}
            s^q C_T(B_\varphi(-t-s),\Omega)\leq \nu(T,\varphi,-t)\leq (s+t)^q C_T(B_\varphi(-t),\Omega).
        \end{equation}
        In particular,
        $$\nu(T,\varphi)=\int_{\{\varphi=-\infty\}} T\wedge(dd^c\varphi)^q=\lim_{t\to+\infty} t^q C_T(B_\varphi(-t),\Omega).$$
    \end{theo}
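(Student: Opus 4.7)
The plan is to reduce to the bounded localized setting via the truncation $\psi:=\max(\varphi,-(s+t))$. First I would verify that $\psi\in\mathcal{E}_0^T(\Omega)\subset\mathcal{F}^T(\Omega)$: it is bounded, its boundary limit on $\partial\Omega\cap Supp\; T$ is inherited from the approximations of $\varphi$, and Proposition \ref{prop 2} applied to $\max(\varphi_j,-(s+t))\geq\varphi_j$ yields the total-mass bound. The decisive consequence of the hypothesis $e^\varphi\in\mathcal{C}(\Omega)$ is that $\partial B_\varphi(-t)\subset\{e^\varphi=e^{-t}\}=\{\varphi=-t\}$, so a neighbourhood of $\partial B_\varphi(-t)$ lies in $\{\varphi>-(s+t)\}$ where $\varphi$ and $\psi$ coincide. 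Corollary \ref{cor 1} applied to $u=\varphi$, $v=\psi$ on $U=B_\varphi(-t)$ then yields the identity
$$\nu(T,\varphi,-t)=\int_{B_\varphi(-t)}(dd^c\varphi)^q\wedge T=\int_{B_\varphi(-t)}(dd^c\psi)^q\wedge T.$$
Since $\psi/(s+t)\in PSH(\Omega,[-1,0])$, inner regularity of the capacity gives $\int_{B_\varphi(-t)}(dd^c\psi/(s+t))^q\wedge T\leq C_T(B_\varphi(-t),\Omega)$; multiplying by $(s+t)^q$ and combining with the identity proves the right-hand inequality $\nu(T,\varphi,-t)\leq(s+t)^q C_T(B_\varphi(-t),\Omega)$.

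For the left inequality I would invoke Lemma \ref{lem 2} to pick a decreasing approximation $\varphi_j\in\mathcal{E}_0^T(\Omega)\cap\mathcal{C}(\overline{\Omega})$ of $\varphi$. Given a compact $K\subset B_\varphi(-t-s)$, $v\in PSH(\Omega,[-1,0])$ (continuous after a standard approximation), and $\delta\in(0,s)$, I form the bounded continuous psh function $w:=(s-\delta)v-(t+\delta)\in[-(s+t),-(t+\delta)]$. On $K$ we have $\varphi<-(s+t)\leq w$, so $K\subset\{\varphi<w\}=\bigcup_j\{\varphi_j<w\}$ (an increasing open union), and compactness of $K$ forces $K\subset\{\varphi_j<w\}$ for $j$ large. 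The bounded comparison principle of \cite{Da-El}---applicable because $\varphi_j\to 0$ on $\partial\Omega\cap Supp\; T$ while $w\leq -(t+\delta)<0$---then yields
$$(s-\delta)^q\int_K(dd^c v)^q\wedge T\leq \int_{\{\varphi_j<w\}}(dd^c\varphi_j)^q\wedge T\leq \int_{\{\varphi\leq-(t+\delta)\}}(dd^c\varphi_j)^q\wedge T,$$
where the second inclusion uses $\{\varphi_j<w\}\subset\{\varphi_j\leq-(t+\delta)\}\subset\{\varphi\leq-(t+\delta)\}$ together with $\varphi\leq\varphi_j$. Since $\{\varphi\leq-(t+\delta)\}$ is closed by continuity of $e^\varphi$, the weak convergence $(dd^c\varphi_j)^q\wedge T\to(dd^c\varphi)^q\wedge T$ from Theorem \ref{th 3} combined with $\{\varphi\leq-(t+\delta)\}\subset B_\varphi(-t)$ upgrades the right-hand side to $\nu(T,\varphi,-t)$ in the limit $j\to+\infty$. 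Sending $\delta\to 0^+$ and taking suprema over $v$ and $K$ produces $s^q C_T(B_\varphi(-t-s),\Omega)\leq\nu(T,\varphi,-t)$.

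The \emph{in particular} statements are consequences of the sandwich. The equality $\nu(T,\varphi)=\int_{\{\varphi=-\infty\}}T\wedge(dd^c\varphi)^q$ follows from the definition together with monotone convergence along the decreasing intersection $B_\varphi(t)\downarrow\{\varphi=-\infty\}$ as $t\to-\infty$, relying on finiteness of $T\wedge(dd^c\varphi)^q$. For the capacity limit, letting $s\to 0^+$ in the right inequality produces $\nu(T,\varphi,-t)\leq t^q C_T(B_\varphi(-t),\Omega)$, so $\nu(T,\varphi)\leq\liminf_{t\to\infty}t^q C_T(B_\varphi(-t),\Omega)$; substituting $s=(1-\alpha)t'$, $t=\alpha t'$ with $\alpha\in(0,1)$ in the left inequality gives $((1-\alpha)t')^q C_T(B_\varphi(-t'),\Omega)\leq\nu(T,\varphi,-\alpha t')$, whence $\limsup_{t'\to\infty}(t')^q C_T(B_\varphi(-t'),\Omega)\leq\nu(T,\varphi)/(1-\alpha)^q$, and $\alpha\to 0^+$ closes the squeeze. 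The main technical obstacle will be justifying the Portmanteau-type upper semicontinuity $\limsup_j\int_F(dd^c\varphi_j)^q\wedge T\leq\int_F(dd^c\varphi)^q\wedge T$ on the closed set $F=\{\varphi\leq-(t+\delta)\}$, which is automatic only when $F\subset\subset\Omega$; this requires leveraging the $\mathcal{F}^T$ boundary decay of $\varphi$ together with $e^\varphi\in\mathcal{C}(\Omega)$ to ensure $F$ is compact in $\Omega$, or else exhausting $F$ by such compacts.
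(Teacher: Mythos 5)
Your right-hand inequality and the derivation of the \emph{in particular} statements follow essentially the paper's own argument (the paper's auxiliary function $\max(\varphi/(s+t),-1)$ is exactly your $\psi/(s+t)$, and the paper also takes $s$ proportional to $t$ in the two-sided estimate to squeeze the limit). One small correction there: $\psi=\max(\varphi,-(s+t))$ need not lie in $\mathcal{E}_0^T(\Omega)$, because the boundary limit $0$ on $\partial\Omega\cap Supp\,T$ is \emph{not} inherited by a decreasing limit; but this is harmless, since Corollary \ref{cor 1} only requires $\varphi,\psi\in\mathcal{F}^T(\Omega)$ and the capacity bound only uses $\psi/(s+t)\in PSH(\Omega,[-1,0])$.

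The left-hand inequality is where you diverge from the paper and where there is a genuine gap. The paper applies Theorem \ref{th 5} directly to $\varphi\in\mathcal{F}^T(\Omega)$ and the bounded function $v_\varepsilon=\max\bigl(v,\frac{\varphi+t+\varepsilon}{s}\bigr)$, so it never has to pass to the limit of $(dd^c\varphi_j)^q\wedge T$ over a non-compact closed set. You instead work with the approximants $\varphi_j$ and the bounded comparison principle, and you are left needing $\limsup_j\int_F(dd^c\varphi_j)^q\wedge T\leq\int_F(dd^c\varphi)^q\wedge T$ for $F=\{\varphi\leq-(t+\delta)\}$. Neither of your proposed resolutions works: (i) $F$ is closed in $\Omega$ by continuity of $e^\varphi$, but it need \emph{not} be relatively compact --- membership in $\mathcal{F}^T(\Omega)$ only says $\varphi$ is a decreasing limit of functions tending to $0$ on $\partial\Omega\cap Supp\,T$, and such a limit can be arbitrarily negative near $\partial\Omega$ (think of a convergent sum of Green-type functions with poles accumulating on $\partial\Omega$); (ii) exhausting $F$ by compacts gives lower bounds for $\int_F(dd^c\varphi)^q\wedge T$, which is the wrong direction for controlling a $\limsup$ of the $\mu_j(F)$. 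The gap is repairable without abandoning your route: by Proposition \ref{prop 2} the total masses $\int_\Omega(dd^c\varphi_j)^q\wedge T$ increase in $j$ and are squeezed between $\liminf_j\int_\Omega(dd^c\varphi_j)^q\wedge T\geq\int_\Omega(dd^c\varphi)^q\wedge T$ (monotonicity, since $\varphi\leq\varphi_j$) and the lower semicontinuity of mass on the open set $\Omega$ under weak convergence, so the total masses converge to $\int_\Omega(dd^c\varphi)^q\wedge T$ and the Portmanteau upper bound on closed sets becomes available. Alternatively, simply quote Theorem \ref{th 5} as the paper does, which packages precisely this difficulty.
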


    \begin{proof}
        Let $t,s>0$ and  $v\in PSH(\Omega,[-1,0])$. For $\varepsilon>0$, we set $v_\varepsilon=\max(v,\frac{\varphi+t+\varepsilon}s)$. Thanks to Theorem \ref{th 5} we have
        $$\begin{array}{lcl}
            \ds \int_{B_\varphi(-t-s-\varepsilon)}T\wedge(dd^cv)^q&=& \ds \int_{B_\varphi(-t-s-\varepsilon)}T\wedge(dd^cv_\varepsilon)^q\\
            &\leq&\ds \int_{\{\varphi<-t+sv-\varepsilon\}}T\wedge(dd^cv_\varepsilon)^q\\
            &\leq&\ds \frac1{s^q}\int_{\{\varphi<-t+sv-\varepsilon\}}T\wedge(dd^c\varphi)^q\\
            &\leq&\ds \frac1{s^q}\int_{B_\varphi(-t)}T\wedge(dd^c\varphi)^q.
        \end{array}$$
        By passing to the supremum over all $v\in PSH(\Omega,[-1,0])$, we obtain the following estimate
        $$s^q C_T(B_\varphi(-s-t-\varepsilon),\Omega)\leq \nu(T,\varphi,-t).$$
        By passing to the limit when $\varepsilon\to0$, the left inequality in (\ref{eq 3.1}) is obtained. However, for the right inequality, we remark that the function $\psi=\max(\frac{\varphi}{s+t},-1)$ is psh and satisfies $-1\leq \psi\leq 0$ on $\Omega$, so by Corollary \ref{cor 1}  and using the fact that $\psi>-1$ near $\partial B_\varphi(-t)$ we obtain
        $$\begin{array}{lcl}
            \ds\int_{B_\varphi(-t)}T\wedge(dd^c\varphi)^q&=&\ds(s+t)^q\int_{B_\varphi(-t)}T\wedge(dd^c\psi)^q\\
            &\leq&\ds (s+t)^q C_T(B_\varphi(-t),\Omega)
        \end{array}$$
        and the right inequality in  (\ref{eq 3.1}) follows.\\
        By the right inequality in (\ref{eq 3.1}), we have
        $$\nu(T,\varphi)=\lim_{t\to+\infty}\nu(T,\varphi,-t)\leq \lim_{t\to+\infty} \frac{(s+t)^q}{t^q}t^q C_T(B_\varphi(-t),\Omega)=\lim_{t\to+\infty} t^q C_T(B_\varphi(-t),\Omega).$$
        If we take $\alpha>1$ and $s=\alpha t$ in the left inequality in (\ref{eq 3.1}), we obtain
        $$\begin{array}{lcl}
            \nu(T,\varphi)=\ds\lim_{t\to+\infty}\nu(T,\varphi,-t)&\geq &\ds \lim_{t\to+\infty}\frac{\alpha^q}{(1+\alpha)^q}(1+\alpha)^qt^qC_T(B_\varphi(-(1+\alpha)t),\Omega)\\
            &=&\ds\left(\frac{\alpha}{1+\alpha}\right)^q \lim_{t\to+\infty} t^q C_T(B_\varphi(-t),\Omega).
        \end{array}$$
        The result follows by letting $\alpha\to+\infty$.
    \end{proof}
    \begin{rem}
        Claim that if $\varphi\in \mathcal F_p^T(\Omega)$ where $e^\varphi$ is  continuous on $\Omega$, then thanks to Proposition \ref{prop 4} and Theorem \ref{th 6}, $\nu(T,\varphi)=0.$
    \end{rem}

\section{Main result}
    The aim of this part is to prove the following main result:\\
    \noindent\textbf{Main result (Comparison principle)}
    \emph{Let $u\in \mathcal{F}^{T}(\Omega)$ and $v\in \mathcal E^{T}(\Omega)$. Then}
    $$\int_{\{u<v\}}(dd^cv)^q\wedge T\leq \int_{\{u<v\}\cup\{u=v=-\infty\}}(dd^cu)^q\wedge T.$$
    Before giving the proof, we give some corollaries.
    \subsection{Consequences of the main result}

    \begin{cor}
        Let $u,v\in \mathcal F_p^{T}(\Omega)$ such that $e^u$ is continuous on $\Omega$. Then
        $$\int_{\{u<v\}}(dd^cv)^q\wedge T\leq \int_{\{u<v\}}(dd^cu)^q\wedge T.$$
    \end{cor}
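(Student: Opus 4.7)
The plan is to deduce this corollary directly from the Main result (Comparison principle) applied to $u$ and $v$, then dispose of the extra piece $\{u=v=-\infty\}$ on the right-hand side using the continuity hypothesis on $e^u$.

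First, I would observe that the hypotheses of the Main result are satisfied: since $\mathcal{F}_p^T(\Omega)\subset\mathcal{F}^T(\Omega)\subset\mathcal{E}^T(\Omega)$ (as noted in the paper right after Definition \ref{defn2}), we have $u\in\mathcal{F}^T(\Omega)$ and $v\in\mathcal{E}^T(\Omega)$. Applying the Main result gives
$$\int_{\{u<v\}}(dd^cv)^q\wedge T\leq \int_{\{u<v\}\cup\{u=v=-\infty\}}(dd^cu)^q\wedge T.$$
Since the sets $\{u<v\}$ and $\{u=v=-\infty\}$ are disjoint (on the latter $u=v$, so $u$ is not strictly less than $v$), the right-hand side splits as
$$\int_{\{u<v\}}(dd^cu)^q\wedge T+\int_{\{u=v=-\infty\}}(dd^cu)^q\wedge T.$$

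The second step is to show that the last integral vanishes. Since $\{u=v=-\infty\}\subset\{u=-\infty\}$ and the measure $(dd^cu)^q\wedge T$ is positive, it suffices to prove $\int_{\{u=-\infty\}}(dd^cu)^q\wedge T=0$. By Theorem \ref{th 6}, this integral equals the Lelong-Demailly number $\nu(T,u)$. Now the Remark following Theorem \ref{th 6} states precisely that if $\varphi\in\mathcal{F}_p^T(\Omega)$ and $e^\varphi$ is continuous on $\Omega$, then $\nu(T,\varphi)=0$; applying this to $\varphi=u$ gives $\nu(T,u)=0$ and therefore the extra term disappears.

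Combining these two steps yields the desired inequality. There is no significant obstacle: the only subtle point is recognizing that the continuity of $e^u$ together with $u\in\mathcal{F}_p^T(\Omega)$ is exactly what is needed to trigger the remark making $\nu(T,u)$ vanish, so that the Main result's right-hand side collapses onto $\{u<v\}$ alone. Note that the continuity of $e^v$ is not required, which is consistent with the asymmetric role of $u$ and $v$ in the statement.
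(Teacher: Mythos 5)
Your proof is correct and follows essentially the same route as the paper: apply the Main result, identify the contribution of $\{u=v=-\infty\}$ with (part of) $\nu(T,u)$ via Theorem \ref{th 6}, and invoke the remark after that theorem to conclude $\nu(T,u)=0$ from $u\in\mathcal F_p^T(\Omega)$ and the continuity of $e^u$. If anything, you are slightly more explicit than the paper about where the continuity hypothesis on $e^u$ enters.
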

    \begin{proof}
        Thanks to the comparaison principle, we have
        $$\int_{\{u<v\}}(dd^cv)^q\wedge T\leq \int_{\{u<v\}\cup\{u=v=-\infty\}}(dd^cu)^q\wedge T\leq\int_{\{u<v\}}(dd^cu)^q\wedge T+\nu(T,u).$$
        The result follows by the fact that $\nu(T,u)=0$ because $u\in \mathcal F_p^{T}(\Omega)$.
    \end{proof}
    \begin{cor}
        Let $u\in\mathcal{F}^{T}(\Omega)$ and $v\in\mathcal{F}_p^{T}(\Omega)$ such that  $e^v$ is continuous on $\Omega$. We assume that  $$(dd^cu)^q\wedge T\leq (dd^cv)^q\wedge T.$$ Then $C_T(\{u<v\},\Omega)=0$.
    \end{cor}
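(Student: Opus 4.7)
The plan is to reduce the vanishing of $C_T(\{u<v\},\Omega)$ to showing $\int_{\{u<v\}}(dd^c\psi)^q\wedge T=0$ for every test function $\psi\in PSH(\Omega,[-1,0])$, and to extract this from the Main Result applied to a suitable bounded perturbation of $v$. Fix such a $\psi$ and $s\in(0,1)$, and set $w:=v+s\psi$. Since $v\in\mathcal F_p^T(\Omega)\subset\mathcal F^T(\Omega)$ and $s\psi$ is a bounded negative psh function, I would first check $w\in\mathcal E^T(\Omega)$: if $(v_j)\subset\mathcal E_0^T(\Omega)$ is a defining sequence for $v$ and $h$ is an exhaustion in $\mathcal E_0^T(\Omega)$, then $\psi_j:=\max(\psi,jh)\in\mathcal E_0^T(\Omega)$ decreases to $\psi$, hence $v_j+s\psi_j\in\mathcal E_0^T(\Omega)$ decreases to $w$, and the mixed Monge--Amp\`ere masses arising in $(dd^c(v_j+s\psi_j))^q\wedge T$ are controlled by Theorem \ref{th 1}.

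With $w\in\mathcal E^T(\Omega)$ at hand, apply the Main Result to $(u,w)$:
$$\int_{\{u<w\}}(dd^cw)^q\wedge T\le \int_{\{u<w\}\cup\{u=w=-\infty\}}(dd^cu)^q\wedge T.$$
Since $s\psi$ is bounded, $\{u=w=-\infty\}=\{u=v=-\infty\}$. By positivity, dropping all cross terms in the binomial expansion of $(dd^cw)^q=(dd^cv+s\,dd^c\psi)^q$ gives the pointwise inequality of positive measures $(dd^cw)^q\wedge T\ge (dd^cv)^q\wedge T+s^q(dd^c\psi)^q\wedge T$. Combining this with the assumption $(dd^cu)^q\wedge T\le (dd^cv)^q\wedge T$, and using the finiteness $\int_\Omega(dd^cv)^q\wedge T<+\infty$ (valid because $v\in\mathcal F^T(\Omega)$, which legitimizes the subtraction), I obtain
$$s^q\int_{\{u<w\}}(dd^c\psi)^q\wedge T\le \int_{\{u=v=-\infty\}}(dd^cu)^q\wedge T.$$

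The right-hand side vanishes: by the same hypothesis it is bounded by $\int_{\{v=-\infty\}}(dd^cv)^q\wedge T=\nu(T,v)$, and the Remark following Theorem \ref{th 6} gives $\nu(T,v)=0$ since $v\in\mathcal F_p^T(\Omega)$ and $e^v$ is continuous. Hence $\int_{\{u<v+s\psi\}}(dd^c\psi)^q\wedge T=0$ for every $s\in(0,1)$. Because $\psi$ is finite-valued and $\psi\le 0$, the sets $\{u<v+s\psi\}$ increase as $s\searrow 0$, and their union contains $\{u<v\}$ (for any $z$ with $u(z)<v(z)$, the finiteness of $\psi(z)\in[-1,0]$ forces $u(z)<v(z)+s\psi(z)$ once $s$ is small enough). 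Monotone convergence then yields $\int_{\{u<v\}}(dd^c\psi)^q\wedge T=0$, and taking the supremum over $\psi\in PSH(\Omega,[-1,0])$ and over compact $K\subset\{u<v\}$ gives $C_T(\{u<v\},\Omega)=0$.

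The main obstacle is the preliminary step, namely verifying $v+s\psi\in\mathcal E^T(\Omega)$ so that the Main Result is applicable; this is where the mixed Monge--Amp\`ere masses must be controlled using the Dabbek--Elkhadhra inequality (Theorem \ref{th 1}). Once this is established, the rest of the argument is short book-keeping: dropping positive cross terms from $(dd^cw)^q$, invoking the measure inequality together with $\nu(T,v)=0$, and passing to the limit $s\searrow 0$.
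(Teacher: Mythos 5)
Your argument is correct and is essentially the paper's own proof: both perturb $v$ to $v+s\psi$ with $\psi$ a competitor for the capacity $C_T$, apply the main comparison principle to the pair $(u,v+s\psi)$, drop the positive cross terms of $(dd^c(v+s\psi))^q\wedge T$ to isolate $s^q(dd^c\psi)^q\wedge T$, and dispose of the set $\{u=v=-\infty\}$ via the hypothesis $(dd^cu)^q\wedge T\le(dd^cv)^q\wedge T$ and $\nu(T,v)=0$ from the Remark after Theorem~\ref{th 6}. The only cosmetic differences are that the paper argues by contradiction and takes $\psi\in PSH(\Omega,[0,1])$, so that $\{u<v\}\subset\{u<v+\varepsilon\psi\}$ holds at once and your final limit $s\searrow 0$ is unnecessary, while, like you, it asserts membership of the perturbed function in $\mathcal F^{T}(\Omega)$ without detailed verification.
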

    \begin{proof}
        Assume that $C_T(\{u<v\},\Omega)>0$, then there exists $\psi\in PSH(\Omega,[0,1])$ such that
        $$\int_{\{u<v\}}(dd^c\psi)^q\wedge T>0.$$ For $\varepsilon>0$ small enough, one has $v+\varepsilon\psi\in\mathcal F^T(\Omega)$ so thanks to the comparaison principle,
        $$\begin{array}{lcl}
            \ds\int_{\{u<v+\varepsilon\psi\}}(dd^c(v+\varepsilon\psi))^q\wedge T&\leq&\ds \int_{\{u<v+\varepsilon\psi\}\cup\{u=v=-\infty\}}(dd^cu)^q\wedge T\\
            &\leq&\ds\int_{\{u<v+\varepsilon\psi\}\cup\{u=v=-\infty\}}(dd^cv)^q\wedge T\\
            &\leq&\ds\int_{\{u<v+\varepsilon\psi\}}(dd^cv)^q\wedge T+\nu(T,v).
        \end{array}$$
        So:
        $$\varepsilon^q\int_{\{u<v\}}(dd^c\psi)^q\wedge T+\int_{\{u<v+\varepsilon\psi\}}(dd^cv)^q\wedge T\leq\int_{\{u<v+\varepsilon\psi\}}(dd^cv)^q\wedge T$$
        which is absurd.
    \end{proof}

\subsection{Proof of the main result}
    To prove the main result, we  shall use a  similar Xing's inequalities (see \cite{Xi1,Xi2} for more details), generalized  to $\mathcal{E}^{T}(\Omega)$. We start by recalling the following lemma:
\begin{lem} \label{lem 3} (See \cite{Ha-Du})
    Let $S$ be a positive closed current of bidimension $(1,1)$ on $\Omega$ and $u,v\in PSH(\Omega)\cap L^{\infty}(\Omega)$. Assume that $u\leq v$ on $\Omega$ and $$\ds\lim_{z\to \partial \Omega}[u(z)-v(z)]=0.$$ Then one has
    $$\int_{\Omega}(v-u)^k dd^cw\wedge S\leq k\int_{\Omega}(1-w)(v-u)^{k-1}dd^cu\wedge S$$
    for all $k\geq 1$ and $w\in PSH(\Omega,[0,1])$.
\end{lem}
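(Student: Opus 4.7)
The plan is to establish this Xing-type inequality via two successive integrations by parts followed by a positivity argument that discards all but the desired term. Setting $I := \int_{\Omega}(v-u)^k\,dd^c w\wedge S$, the idea is to transfer the $dd^c$ from $w$ onto $(v-u)^k$, introduce the weight $(1-w)$ along the way, and then expand $dd^c[(v-u)^k]$ via the product rule; the resulting decomposition of $I$ splits into three terms whose signs can be controlled using positivity of $S$, $dd^c v$ and the $(1,1)$-form $d(v-u)\wedge d^c(v-u)$.

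Concretely, using $dS=0$ together with the vanishing of $(v-u)^k$ on $\partial\Omega$, Stokes' theorem would give
\[I=-\int_\Omega d\bigl[(v-u)^k\bigr]\wedge d^c w\wedge S.\]
Because the form $df\wedge d^c g-dg\wedge d^c f$ has bidegree $(2,0)+(0,2)$ while $S$ has bidegree $(n-1,n-1)$, the wedge product contributes nothing at top degree, so
\[\int_\Omega d\bigl[(v-u)^k\bigr]\wedge d^c w\wedge S=\int_\Omega dw\wedge d^c\bigl[(v-u)^k\bigr]\wedge S.\]
A second Stokes' formula applied to $d\bigl[(1-w)\,d^c[(v-u)^k]\wedge S\bigr]$ (using $dS=0$ and the vanishing boundary trace) then yields
\[I=-\int_\Omega (1-w)\,dd^c\bigl[(v-u)^k\bigr]\wedge S.\]
Substituting the product-rule expansion
\[dd^c\bigl[(v-u)^k\bigr]=k(v-u)^{k-1}\bigl(dd^c v-dd^c u\bigr)+k(k-1)(v-u)^{k-2}\,d(v-u)\wedge d^c(v-u),\]
one obtains
\begin{align*}
I=&\ k\int_\Omega(1-w)(v-u)^{k-1}dd^c u\wedge S -k\int_\Omega(1-w)(v-u)^{k-1}dd^c v\wedge S\\
&-k(k-1)\int_\Omega(1-w)(v-u)^{k-2}d(v-u)\wedge d^c(v-u)\wedge S.
\end{align*}
Each of the last two integrals is non-negative --- $S$ and $dd^c v$ are positive closed currents, $d(v-u)\wedge d^c(v-u)$ is a positive $(1,1)$-form, and $(1-w),(v-u)^{k-1}$ are $\geq 0$ --- so discarding them produces exactly the claimed upper bound.

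The main obstacle is the rigorous justification of the two integrations by parts when $u,v$ are only bounded psh with $v-u\to 0$ at $\partial\Omega$ and are not necessarily smooth up to the boundary; the most delicate step is the second IBP at $k=1$, where the a priori boundary term $\int_{\partial\Omega}(1-w)\,d^c(v-u)\wedge S$ is not obviously zero. The plan is to reduce to the smooth case by a standard regularization: approximate $u,v$ from above by sequences in $\mathcal E_0^T(\Omega)\cap\mathcal C(\overline\Omega)$ vanishing on $\partial\Omega$ (as provided by Lemma \ref{lem 2}), carry out the identity in the approximation where the Cegrell-type IBP formula eliminates boundary terms, and pass to the limit using the Bedford-Taylor continuity of $(dd^c\,\cdot\,)\wedge S$ on bounded psh decreasing sequences (invoked e.g.\ as in \cite{Da-El}). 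The apparent singularity of $(v-u)^{k-2}$ on $\{v=u\}$ when $1\leq k<2$ is harmless: the coefficient $k(k-1)$ vanishes at $k=1$, and for $k>1$ one rewrites $k(k-1)(v-u)^{k-2}d(v-u)=k\,d[(v-u)^{k-1}]$, turning the offending term into $k\,d[(v-u)^{k-1}]\wedge d^c(v-u)\wedge S$, a well-defined positive current.
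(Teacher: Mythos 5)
The paper does not actually prove this lemma: it is imported verbatim from \cite{Ha-Du}, so there is no in-paper argument to compare against. Your computational skeleton is the standard Xing-type argument and is correct as a formal identity: two integrations by parts yield $I=-\int_\Omega(1-w)\,dd^c[(v-u)^k]\wedge S$, the product-rule expansion of $dd^c[(v-u)^k]$ is right, and the two discarded terms $k\int(1-w)(v-u)^{k-1}dd^cv\wedge S$ and $k\int(1-w)\,d[(v-u)^{k-1}]\wedge d^c(v-u)\wedge S$ are indeed non-negative (note $k(k-1)\ge 0$ since $k\ge 1$, and your rewriting that absorbs the apparent singularity of $(v-u)^{k-2}$ for $1<k<2$ is the correct move).

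The genuine gap is in the step you yourself flag as delicate, because the repair you propose does not work. Lemma \ref{lem 2} produces continuous approximants for functions in $\mathcal F^T(\Omega)$; here $u,v$ are arbitrary bounded psh functions (not assumed negative, not assumed in any Cegrell-type class), so that lemma is simply not applicable. More importantly, even if you had approximants $u_j,v_j\in\mathcal E_0^T(\Omega)\cap\mathcal C(\overline\Omega)$ vanishing on $\partial\Omega$, this only forces $v_j-u_j\to 0$ at the boundary; it says nothing about $d^c(v_j-u_j)$ there, so the $k=1$ boundary term $\int_{\partial\Omega}(1-w)\,d^c(v_j-u_j)\wedge S$ that you identified remains uncontrolled, and the ``Cegrell-type IBP'' is a symmetry statement for pairs of psh functions in $\mathcal E_0$, which does not cover the non-psh weights $(1-w)$ and $(v-u)^k$ appearing here. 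The correct reduction --- the one this paper itself uses in the proof of Lemma \ref{lem 4} and the one in Xing's original argument --- is to first prove the inequality when $u=v$ on $\Omega\smallsetminus K$ for some $K\subset\subset\Omega$: then $v-u$, $d(v-u)$ and $d^c(v-u)$ all vanish on the open set $\Omega\smallsetminus K$, every Stokes application is interior and boundary-free (after the usual interior smoothing and Bedford--Taylor limit), including the $k=1$ case. The general case then follows by applying this to $v_\varepsilon:=\max(u,v-\varepsilon)$, which coincides with $u$ near $\partial\Omega$ precisely because of the hypothesis $v-u\to 0$ on $\partial\Omega$, and letting $\varepsilon\searrow 0$ by monotone convergence on both sides (since $(v_\varepsilon-u)^k\nearrow(v-u)^k$ and $(v_\varepsilon-u)^{k-1}\nearrow(v-u)^{k-1}$ against the fixed positive measures $dd^cw\wedge S$ and $(1-w)\,dd^cu\wedge S$). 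With that substitution your proof is complete.
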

\begin{lem}\label{lem 4}
    Let $u,v\in PSH(\Omega)\cap L^{\infty}(\Omega)$ such that $u\leq v$ on $\Omega$ and $$\ds\lim_{z\to \partial \Omega}[u(z)-v(z)]=0.$$ Then  one has
    $$\frac{1}{q!}\int_{\Omega}(v-u)^q dd^cw_1\wedge...\wedge dd^cw_q\wedge T+\int_{\Omega}(r-w_1)(dd^cv)^q\wedge T\leq \int_{\Omega}(r-w_1)(dd^cu)^q\wedge T$$
    for every $r\geq1$ and $w_1,..., w_q\in PSH(\Omega,[0,1])$.
\end{lem}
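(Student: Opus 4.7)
The plan is to rewrite the claimed inequality as
\begin{equation*}
\frac{1}{q!}\int_{\Omega}(v-u)^q\, dd^c w_1 \wedge \cdots \wedge dd^c w_q \wedge T \leq \int_{\Omega}(r-w_1)\bigl[(dd^c u)^q-(dd^c v)^q\bigr]\wedge T,
\end{equation*}
and to estimate the two sides independently. For the right-hand side I will use the telescoping identity
\begin{equation*}
(dd^c u)^q-(dd^c v)^q = dd^c(u-v)\wedge\sum_{j=0}^{q-1}(dd^c u)^j\wedge(dd^c v)^{q-1-j},
\end{equation*}
and then transfer the operator $dd^c$ from $u-v$ onto $r-w_1$ by Bedford--Taylor integration by parts. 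This uses that each $S_j:=(dd^c u)^j\wedge(dd^c v)^{q-1-j}\wedge T$ is a closed positive current of bidimension $(1,1)$, that $u-v$ is bounded with vanishing boundary limit, and that $dd^c r = 0$. The outcome is
\begin{equation*}
\int_{\Omega}(r-w_1)\bigl[(dd^c u)^q-(dd^c v)^q\bigr]\wedge T = \sum_{j=0}^{q-1}\int_{\Omega}(v-u)\, dd^c w_1\wedge (dd^c u)^j\wedge(dd^c v)^{q-1-j}\wedge T,
\end{equation*}
and since every summand is non-negative, retaining only the term $j=q-1$ gives the lower bound $\int_{\Omega}(v-u)\, dd^c w_1 \wedge (dd^c u)^{q-1}\wedge T$.

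For the left-hand side I will apply Lemma~\ref{lem 3} iteratively $q-1$ times. At step $i$ ($i=1,\ldots,q-1$) I take the exponent $k=q-i+1$, the plurisubharmonic function $w=w_{q-i+1}$, and as ambient current the closed positive current of bidimension $(1,1)$
\begin{equation*}
S^{(i)}:=(dd^c u)^{i-1}\wedge dd^c w_1\wedge\cdots\wedge dd^c w_{q-i}\wedge T.
\end{equation*}
Lemma~\ref{lem 3}, together with the crude bound $1-w_{q-i+1}\leq 1$, trades one power of $(v-u)$ for an extra factor of $dd^c u$ at the cost of a multiplicative constant $q-i+1$. Composing these $q-1$ estimates, whose product is $q(q-1)\cdots 2 = q!$, yields
\begin{equation*}
\int_{\Omega}(v-u)^q\, dd^c w_1\wedge\cdots\wedge dd^c w_q\wedge T\leq q!\int_{\Omega}(v-u)\, dd^c w_1\wedge (dd^c u)^{q-1}\wedge T.
\end{equation*}
Dividing by $q!$ and combining with the lower bound from the preceding paragraph establishes the reformulated inequality, which is exactly the statement of the lemma after rearrangement.

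The main technical hurdle is the integration-by-parts transfer of $dd^c$ from $u-v$ onto $r-w_1$. Although $r-w_1$ is not itself plurisubharmonic, it is bounded, and $u-v$ is a bounded difference of plurisubharmonic functions with zero boundary limit, so the Bedford--Taylor Stokes formula applies. To justify it rigorously I would regularize $u$ and $v$ by decreasing smooth plurisubharmonic approximants on an exhaustion of $\Omega$, apply the classical Stokes formula at each level, and pass to the limit using Bedford--Taylor continuity of the mixed Monge--Amp\`ere wedge products against $T$ under decreasing sequences of bounded plurisubharmonic functions; the hypothesis $\lim_{z\to\partial\Omega}[u(z)-v(z)]=0$ is precisely what forces the boundary contributions to vanish.
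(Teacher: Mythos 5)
Your argument is essentially the paper's own proof: iterate Lemma~\ref{lem 3} (with the crude bound $1-w\leq 1$) to trade $(v-u)^q$ against $dd^cw_1\wedge\cdots\wedge dd^cw_q$ for $q!\int_\Omega(v-u)\,dd^cw_1\wedge(dd^cu)^{q-1}\wedge T$, then use integration by parts, positivity of the extra terms $\int_\Omega(v-u)\,dd^cw_1\wedge(dd^cu)^j\wedge(dd^cv)^{q-1-j}\wedge T$, and the telescoping identity to bound this by $q!\int_\Omega(r-w_1)\bigl[(dd^cu)^q-(dd^cv)^q\bigr]\wedge T$; this is exactly the chain of inequalities in the paper. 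The one place where your write-up is weaker is the justification of the integration by parts in the general case: the paper first reduces to the situation where $u=v$ outside a compact set (by replacing $v$ with $v_\varepsilon=\max(u,v-\varepsilon)$ and letting $\varepsilon\searrow 0$ at the very end, using weak convergence of $(dd^cv_\varepsilon)^q\wedge T$ and lower semicontinuity of $r-w_1$), so that every application of Stokes is genuinely boundary-term-free. Your alternative plan of smoothing $u,v$ on an exhaustion and letting the boundary terms vanish is not automatic from $\lim_{z\to\partial\Omega}[u(z)-v(z)]=0$ alone, since the Stokes boundary integrals also involve first-order terms of the form $d(u-v)\wedge d^c(\cdot)$ which that hypothesis does not control; the $\max(u,v-\varepsilon)$ device is the standard and cleaner way to dispose of this.
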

\begin{proof}
    Let $K\subset \subset \Omega$ and assume that $u=v$ on $\Omega\smallsetminus K$. Using Lemma \ref{lem 3} we obtain
    $$\begin{array}{l}
        \ds\int_{\Omega}(v-u)^q dd^cw_1\wedge...\wedge dd^cw_q\wedge T\\
        \ds\leq q\int_{\Omega}(v-u)^{q-1}dd^cw_1\wedge...\wedge dd^cw_{q-1}\wedge dd^cu\wedge T\\
        \vdots\\
        \ds\leq q!\int_{\Omega}(v-u)dd^cw_1\wedge (dd^cu)^{q-1}\wedge T\\
        \ds\leq q!\int_{\Omega}(w_1-r)dd^c(v-u)\wedge\left(\sum_{i=0}^{q-1}(dd^cu)^i\wedge (dd^cv)^{q-i-1}\right)\wedge T\\
        \ds= q!\int_{\Omega}(r-w_1)dd^c(u-v)\wedge\left(\sum_{i=0}^{q-1}(dd^cu)^i\wedge (dd^cv)^{q-i-1}\right)\wedge T\\
        \ds= q!\int_{\Omega}(r-w_1)((dd^cu)^q-(dd^cv)^q)\wedge T.
    \end{array}$$
    In the general case, for every $\varepsilon>0$ we set $v_{\epsilon}=\max(u,v-\varepsilon)$. Then $v_{\epsilon}\nearrow v$ on $\Omega$ and satisfies $v_{\epsilon}=u$ on $\Omega\smallsetminus K$ for some $K\subset \subset \Omega$. Hence
    $$\frac{1}{q!}\int_{\Omega}(v_{\varepsilon}-u)^q dd^cw_1\wedge...\wedge dd^cw_q\wedge T+\int_{\Omega}(r-w_1)(dd^cv_{\varepsilon})^q\wedge T\leq \int_{\Omega}(r-w_1)(dd^cu)^q\wedge T$$
    Since $v_{\varepsilon}-u\nearrow v-u$,  the family of measures $(dd^cv_{\varepsilon})^q\wedge T$ converges weakly to $(dd^cv)^q\wedge T$ as $\varepsilon \searrow 0$ and the function $r-w_1$ is lower semicontinuous then, by letting $\varepsilon \searrow 0$, we obtain the desired inequality.
\end{proof}
\begin{prop} \label{prop 5} Let $r\geq 1$ and $w\in PSH(\Omega,[0,1])$.
    \begin{enumerate}
        \item[(a)] For every  $u,v\in\mathcal{F}^{T}(\Omega)$ such that $u\leq v$ on $\Omega$ one has
            \begin{equation}\label{eq 4.1}
                 \ds\frac{1}{q!}\int_{\Omega}(v-u)^q  (dd^cw)^q\wedge T+\int_{\Omega}(r-w)(dd^cv)^q \wedge T 	 \leq\ds\int_{\Omega}(r-w)(dd^cu)^q\wedge T.
            \end{equation}	
        \item[(b)] Furthermore, Inequality (\ref{eq 4.1}) holds for  $u,v\in\mathcal{E}^{T}(\Omega)$ such that $u\leq v$ on $\Omega$ and $u=v$ on $\Omega \smallsetminus K$ for some $K\subset \subset \Omega$.
    \end{enumerate}
\end{prop}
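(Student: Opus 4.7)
\emph{Plan of proof.} My strategy is to derive part (a) from Lemma~\ref{lem 4} by an approximation argument, and to reduce part (b) to (a) via a truncation against an exhaustion function.

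For \textbf{part (a)}, the plan is: using Definition~\ref{defn2} together with Lemma~\ref{lem 2}, select decreasing sequences $(u_j)_j, (v_j)_j \subset \mathcal{E}_0^T(\Omega)\cap \mathcal{C}(\overline\Omega)$ with $u_j\searrow u$ and $v_j\searrow v$. After replacing $v_j$ by $\max(u_j,v_j)$, which stays in $\mathcal{E}_0^T(\Omega)$, still decreases to $v$ (since $u\le v$), and now dominates $u_j$, we may assume $u_j\le v_j$ throughout. Applying Lemma~\ref{lem 4} with $w_1=\cdots=w_q=w$ to each bounded pair $(u_j,v_j)$ yields
$$\frac{1}{q!}\int_\Omega (v_j-u_j)^q(dd^cw)^q\wedge T+\int_\Omega (r-w)(dd^cv_j)^q\wedge T\le \int_\Omega (r-w)(dd^cu_j)^q\wedge T.\qquad (\ast)$$

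The crux is then to pass to the limit in $(\ast)$. For the first left-hand term, $(v_j-u_j)^q\to(v-u)^q$ pointwise off the $T$-pluripolar set $\{u=-\infty\}\cup\{v=-\infty\}$ (by Proposition~\ref{prop 3}), and $(dd^cw)^q\wedge T$ does not charge this set because $w$ is bounded, so Fatou's lemma gives the needed $\liminf$ bound. For the second left-hand term, the lower semicontinuity of $r-w$ combined with the weak convergence $(dd^cv_j)^q\wedge T\to(dd^cv)^q\wedge T$ supplied by Theorem~\ref{th 3} again yields a $\liminf$ in the correct direction. For the right-hand side one must establish genuine convergence
$$\int_\Omega(r-w)(dd^cu_j)^q\wedge T\longrightarrow \int_\Omega(r-w)(dd^cu)^q\wedge T,$$
which I would handle by splitting $r-w$ into its constant part and its $-w$ part: for the constant part, the total masses $\int_\Omega(dd^cu_j)^q\wedge T$ converge (the upper bound comes from Proposition~\ref{prop 2}, the lower one from weak convergence tested against an exhaustion by compactly supported $\chi\nearrow 1$), while for the $-w$ part the $C_T$-quasicontinuity of $w$ (Corollary~\ref{cor 2}) allows one to approximate $w$ by a continuous function up to an open set of arbitrarily small $C_T$-capacity, on which the capacity estimate of Proposition~\ref{prop 3} controls the error uniformly in $j$.

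For \textbf{part (b)}, the plan is to reduce to (a) by truncation. Fix an exhaustion function $\rho\in\mathcal{E}_0^T(\Omega)$ and an open $U$ with $K\subset\subset U\subset\subset\Omega$. For $M>0$ sufficiently large, set $\widetilde u:=\max(u,M\rho)$ and $\widetilde v:=\max(v,M\rho)$; these belong to $\mathcal{F}^T(\Omega)$, still satisfy $\widetilde u\le\widetilde v$, and agree with $u,v$ respectively on a neighborhood of $K$ (so in particular on $\{u<v\}\subset K$), while on $\Omega\setminus U$ we have $\widetilde u=\widetilde v$. Applying part (a) to $(\widetilde u,\widetilde v)$ and using Corollary~\ref{cor 1} to identify the integrals on $U$ (where $u=\widetilde u$ and $v=\widetilde v$) while the complement contributes equally on both sides gives the claim.

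\textbf{Main obstacle.} The delicate step is passing to the limit on the right-hand side of $(\ast)$: upgrading weak convergence to genuine convergence of $\int(r-w)(dd^cu_j)^q\wedge T$. Since $r-w$ is only lower semicontinuous, weak convergence yields $\liminf$ in the wrong direction, so one is forced to exploit both the $C_T$-quasicontinuity of $w$ and the convergence of total Monge-Amp\`ere masses under decreasing sequences in $\mathcal{F}^T(\Omega)$---both being consequences of the capacity machinery established in the preceding section.
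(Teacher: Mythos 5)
Your architecture for part (a) — decreasing sequences in $\mathcal E_0^T(\Omega)$, Lemma \ref{lem 4} applied with $w_1=\dots=w_q=w$, Fatou and lower semicontinuity for the two left-hand terms — matches the paper, but your mechanism for the right-hand side limit does not work. You propose to control $\int_{O_\varepsilon}(dd^cu_j)^q\wedge T$ on a small-capacity open set $O_\varepsilon$ ``uniformly in $j$'' via Proposition \ref{prop 3}. That proposition goes the wrong way: it bounds the capacity of sublevel sets by the Monge--Amp\`ere mass, not the Monge--Amp\`ere mass of a set by its capacity. To get $\int_{O}(dd^cu_j)^q\wedge T\leq C\, C_T(O)$ you would need the $u_j$ uniformly bounded (one only has $\int_{O}(dd^cu_j)^q\wedge T\leq \|u_j\|_\infty^q\, C_T(O)$), and the $u_j$ decrease to a possibly unbounded $u$. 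In fact no uniform domination by $C_T$ can hold: for $u\in\mathcal F^T(\Omega)$ the limit measure $(dd^cu)^q\wedge T$ may charge the $T$-pluripolar set $\{u=-\infty\}$ (this is the residual mass $\nu(T,u)$ of Theorem \ref{th 6}), so the measures $(dd^cu_j)^q\wedge T$ are not uniformly absolutely continuous with respect to $C_T$. The paper sidesteps this by approximating the \emph{weight} rather than the measures: it replaces $w$ by continuous psh functions vanishing on $\partial\Omega$ (Cegrell's approximation), for which $\int(r-w^{(k)})(dd^cu_m)^q\wedge T$ converges by weak convergence together with the total-mass convergence supplied by Proposition \ref{prop 2}, and the resulting right-hand side is still dominated by $\int_\Omega(r-w)(dd^cu)^q\wedge T$ because $w^{(k)}\geq w$.

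Part (b) has a more serious flaw. Your truncations $\widetilde u=\max(u,M\rho)$ and $\widetilde v=\max(v,M\rho)$ are bounded, so they cannot agree with $u$ and $v$ on a neighborhood of $K$ when $u\in\mathcal E^T(\Omega)$ is unbounded there: $\widetilde u=M\rho\neq u$ on the open set $\{u<M\rho\}$, which contains $\{u=-\infty\}$ and is nonempty for every $M$ whenever $u$ is unbounded on $K$. Consequently $(dd^c\widetilde u)^q\wedge T$ and $(dd^cu)^q\wedge T$ need not coincide on $U$ (the former puts no mass on $\{u=-\infty\}$, the latter may), and Corollary \ref{cor 1} cannot be invoked to identify the integrals. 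The paper instead uses the local structure of $\mathcal E^T(\Omega)$: it takes $\widetilde v\in\mathcal F^T(\Omega)$ equal to $v$ \emph{exactly} on a neighborhood $W$ of $K$, and glues $\widetilde u:=u$ on $G$ and $\widetilde u:=\widetilde v$ elsewhere, which is psh because $u=v=\widetilde v$ on $W\smallsetminus K$; this preserves the unbounded behaviour of $u$ and $v$ near $K$, which your truncation destroys.
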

\begin{proof}
    $(a)$ Let $u,v\in \mathcal{F}^{T}(\Omega)$ and $u_m,v_j\in\mathcal{E}_{0}^{T}(\Omega)$ which decrease to $u$ and $v$ respectively as in Definition \ref{defn2}. Replace $v_j$ by $\max(u_j,v_j)$ we may assume that $u_j\leq v_j$ for $j\geq1$. By lemma \ref{lem 4} we have for $m\geq j\geq 1$
    $$\ds\frac{1}{q!}\int_{\Omega}(v_j-u_m)^q\wedge(dd^cw)^q \wedge T+\int_{\Omega}(r-w)(dd^cv_j)^q\wedge T \ds\leq\int_{\Omega}(r-w)(dd^cu_m)^q\wedge T.$$
    By  approximating $w$ by a sequence of continuous psh functions vanishing on $\partial \Omega$ (see \cite{Ce2}) and using Proposition \ref{prop 2}, we obtain when $m\rightarrow+\infty$
    $$\ds\frac{1}{q!}\int_{\Omega}(v_j-u)^q\wedge(dd^cw)^q \wedge T+\int_{\Omega}(r-w)(dd^cv_j)^q\wedge T 	 \leq\ds\int_{\Omega}(r-w)(dd^cu)^q\wedge T.$$
    Since $ r-w$ is lower semi-continuous then
    $$\ds\lim_{j\to \infty}\int_{\Omega}(r-w)(dd^cv_j)^q\wedge T\geq \int_{\Omega}(r-w)(dd^cv)^q\wedge T.$$ Hence by tending $j\to+\infty$, we obtain the  result.\\
    $(b)$ Let $G$ and $W$ be open subsets of $\Omega$ such that $K\subset \subset G\subset \subset W \subset \subset \Omega$. There exists $\widetilde{v}\in \mathcal{F}^{T}(\Omega)$ such that $\widetilde{v}\geq v$ on $\Omega$ and $\widetilde{v}=v$ on $W$. Let $\widetilde{u}$ such that $\widetilde{u}=u$ on $G$ and $\widetilde{u}=\widetilde{v}$ either. Since $u=v=\widetilde{v}$ on $W\smallsetminus  K$, we have $\widetilde{u}\in PSH^-(\Omega)$. It follows that $\widetilde{u}\in\mathcal{F}^{T}(\Omega)$, $\widetilde{u}\leq \widetilde{v}$ and $\widetilde{u}=u$ on $W$.\\ Using $(a)$ we obtain
    $$\ds\frac{1}{q!}\int_{\Omega}(\widetilde{v}-\widetilde{u})^q\wedge(dd^cw)^q \wedge T+\int_{\Omega}(r-w)(dd^c\widetilde{v})^q\wedge T\ds\leq\int_{\Omega}(r-w)(dd^c\widetilde{u})^q\wedge T.$$
    As $\widetilde{v}=\widetilde{u}$ on $\Omega\smallsetminus  G$ then
    $$\ds\frac{1}{q!}\int_{W}(\widetilde{v}-\widetilde{u})^q\wedge(dd^cw)^q \wedge T+\int_{W}(r-w)(dd^c\widetilde{v})^q\wedge T\leq\int_{W}(r-w)(dd^c\widetilde{u})^q\wedge T.$$
    Now since $\widetilde{u}=u$, $\widetilde{v}=v$ and $u=v$ on $\Omega\smallsetminus  K$ we obtain
    $$\ds\frac{1}{q!}\int_{\Omega}(v-u)^q \wedge(dd^cw)^q \wedge T+\int_{\Omega}(r-w)(dd^cv)^q\wedge T \ds\leq\int_{\Omega}(r-w)(dd^cu)^q\wedge T.$$
\end{proof}
\begin{rem}
    If we take $w=0$ and $r=1$  in Proposition \ref{prop 5}, we obtain another proof of Proposition \ref{prop 2}.
\end{rem}

\begin{theo}\label{th 7}
    Let $u,w_1,..., w_{q-1}\in \mathcal{F}^{T}(\Omega)$ and $v\in PSH^-(\Omega)$. If we set $S=dd^cw_1\wedge ...\wedge dd^cw_{q-1}$ then
    $$dd^c\max(u,v)\wedge T\wedge S_{|\{u>v\}}=dd^cu\wedge T\wedge S_{|\{u>v\}}.$$
\end{theo}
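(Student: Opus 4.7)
The strategy is to reduce the statement to the bounded continuous case, where locality of $dd^c$ is classical, and then pass to the limit through a triple approximation. By Lemma \ref{lem 2}, I would choose decreasing sequences $(u_j)_j$ and $(w_{i,j})_j$ in $\mathcal{E}_0^T(\Omega)\cap\mathcal{C}(\overline{\Omega})$ with $u_j\searrow u$ and $w_{i,j}\searrow w_i$ for $1\leq i\leq q-1$, and set $S_j:=dd^cw_{1,j}\wedge\cdots\wedge dd^cw_{q-1,j}$. For the potentially unbounded $v\in PSH^-(\Omega)$ I would take the canonical truncation $v_k:=\max(v,-k)\in PSH^-(\Omega)\cap L^\infty(\Omega)$, decreasing to $v$ as $k\to+\infty$.

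Since $u_j$ is continuous on $\overline{\Omega}$ while $v_k$ is bounded and upper semicontinuous, the difference $u_j-v_k$ is lower semicontinuous, so the set $\Omega_{j,k}:=\{u_j>v_k\}$ is open. On $\Omega_{j,k}$ one has $\max(u_j,v_k)=u_j$, and the Bedford-Taylor locality for bounded psh functions, extended to the $T$-version in \cite{Da-El}, gives
$$dd^c\max(u_j,v_k)\wedge T\wedge S_j\;=\;dd^cu_j\wedge T\wedge S_j \quad \hbox{on }\Omega_{j,k}.$$
Note the inclusion $\{u>v_k\}\subset\Omega_{j,k}$ for every $j$ (since $u_j\geq u>v_k$ on $\{u>v_k\}$), together with the monotone exhaustion $\{u>v_k\}\nearrow\{u>v\}$ as $k\to+\infty$. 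For the passage to the limit I would invoke Theorem \ref{th 3}: $dd^cu_j\wedge T\wedge S_j\rightharpoonup dd^cu\wedge T\wedge S$ weakly as $j\to+\infty$, and $dd^c\max(u_j,v_k)\wedge T\wedge S_j\rightharpoonup dd^c\max(u,v_k)\wedge T\wedge S$ (observing that $\max(u,v_k)\in\mathcal{F}^T(\Omega)$ is obtained from Proposition \ref{prop 2} applied to $u_j\leq\max(u_j,v_k)\in\mathcal{E}_0^T(\Omega)$, which bounds the Monge-Amp\`ere mass of $\max(u_j,v_k)$ by that of $u_j$). A subsequent passage $k\to+\infty$ gives $\max(u,v_k)\searrow\max(u,v)$ and the corresponding weak convergence of Monge-Amp\`ere currents.

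The main obstacle I anticipate is that $\{u>v\}$ is not in general open, both $u$ and $v$ being only upper semicontinuous, so weak convergence alone does not transfer the equality from the open sets $\Omega_{j,k}$ to the Borel set $\{u>v\}$. I would resolve this via the $C_T$-quasicontinuity of Corollary \ref{cor 2} together with the capacity estimate of Proposition \ref{prop 3}: for every $\varepsilon>0$, pick an open $O_\varepsilon\subset\Omega$ with $C_T(O_\varepsilon,\Omega)<\varepsilon$ on whose complement $u,v,w_1,\ldots,w_{q-1}$ are continuous, so that $\{u>v\}\setminus O_\varepsilon$ is relatively open in $\Omega\setminus O_\varepsilon$ and the locality identity extends there; the contribution on $O_\varepsilon$ is then bounded by a constant multiple of $C_T(O_\varepsilon)$ and vanishes as $\varepsilon\to 0$.
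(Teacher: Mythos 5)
Your overall strategy (approximate $u$ and the $w_i$ by continuous functions via Lemma \ref{lem 2}, use locality of $dd^c$ on the open sets $\{u_j>v_k\}$, then pass to the limit) coincides with the first step of the paper's proof, and you correctly identify the real difficulty: $\{u>v\}$ and $\{u>v_k\}$ are not open, so weak convergence on the $j$-dependent open sets $\Omega_{j,k}$ does not transfer the identity. The gap lies in your proposed resolution. You claim that after excising a quasicontinuity set $O_\varepsilon$ with $C_T(O_\varepsilon)<\varepsilon$, ``the contribution on $O_\varepsilon$ is bounded by a constant multiple of $C_T(O_\varepsilon)$.'' For the limit argument this bound must hold uniformly in $j$ for the measures $dd^cu_j\wedge T\wedge S_j$ and $dd^c\max(u_j,v_k)\wedge T\wedge S_j$. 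But an estimate of the form $\int_E dd^c\psi_1\wedge\cdots\wedge dd^c\psi_q\wedge T\leq C\,C_T(E)$ requires the potentials to lie in a fixed range $[-M,0]$ (the capacity is a supremum over $PSH(\Omega,[-1,0])$ only), and your $u_j,w_{i,j}$ are each bounded but not uniformly in $j$, since functions in $\mathcal F^T(\Omega)$ may be unbounded. The failure is not cosmetic: the limit measure $dd^cu\wedge T\wedge S$ can charge $T$-pluripolar (zero-capacity) sets --- Theorem \ref{th 6} shows that $\int_{\{\varphi=-\infty\}}T\wedge(dd^c\varphi)^q=\lim_{t\to+\infty}t^qC_T(B_\varphi(-t),\Omega)$ may be positive --- and since $\liminf_j\nu_j(O_\varepsilon)\geq\nu(O_\varepsilon)$ for open $O_\varepsilon$, the error term need not tend to $0$ with $C_T(O_\varepsilon)$. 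This is essentially the same trap the paper points out in the incomplete proof of Lemma \ref{lem 2} in \cite{Ha-Du}, where a bounded-case result was applied to unbounded functions.

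The paper avoids this by a different mechanism. It first reduces to constant $v\equiv a<0$, writing $\{u>v\}=\bigcup_{a\in\mathbb Q^-}\{u>a>v\}$ and exploiting that $\{a>v\}$ is open because $v$ is upper semicontinuous; then, instead of excising a small-capacity set, it multiplies the identity on $\{u_j>a\}$ by the weight $\max(u-a,0)$, which is bounded (with values in $[0,-a]$), quasicontinuous, and vanishes exactly off $\{u>a\}$. The convergence theorem of \cite{Ha-Du} for such weights against $dd^cu_j\wedge T\wedge S^j$ then gives $\max(u-a,0)\left[dd^c\max(u,a)-dd^cu\right]\wedge T\wedge S=0$, i.e.\ equality of the two measures on $\{u>a\}$, with no openness assumption and no uncontrolled remainder. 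If you wish to keep your scheme, you would in effect have to prove such a weighted convergence statement anyway; I recommend replacing the capacity-excision step by the weight $\max(u-a,0)$ (or $\max(u-v_k,0)$ in your truncated setting), together with the reduction to constant $v$.
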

\begin{proof}
    We prove the theorem in two steps, first  we assume that $v\equiv a < 0$.  Thanks to Lemma \ref{lem 2}, there exist $u_j, w_{k,j}\in\mathcal{E}_{0}^{T}(\Omega)\cap \mathcal{C}(\overline{\Omega})$ such that $(u_j)_j$ decreases to $u$ and  $(w_{k,j})_j$ decreases to $w_k$ for each $1\leq k\leq q-1$.
    Since $\{u_j>a\}$ is open, one has
    $$dd^c\max(u_j,a)\wedge T\wedge S^j_{|\{u_j>a\}}=dd^cu_j\wedge T\wedge S^j_{|\{u_j>a\}}$$
    where $S^j=dd^cw_{1,j}\wedge ...\wedge dd^cw_{q-1,j}$. As $\{u>a\}\subset \{u_j>a\}$ we obtain
    $$dd^c\max(u_j,a)\wedge T\wedge S^j_{|\{u>a\}}=dd^cu_j\wedge T\wedge S^j_{|\{u>a\}}$$
    It follows from \cite{Ha-Du} that
    $$\max(u-a,0)dd^c\max(u_j,a)\wedge T\wedge S^j\underset{j\to+\infty}\longrightarrow \max(u-a,0)dd^c\max(u,a)\wedge T\wedge S$$
    $$\max(u-a,0)dd^cu_j\wedge T\wedge S^j\underset{j\to+\infty}\longrightarrow \max(u-a,0)dd^cu\wedge T\wedge S.$$
    Hence
    $$\max(u-a,0)[dd^c\max(u,a)\wedge T\wedge S-dd^cu\wedge T\wedge S]=0.$$
    So
    $$dd^c\max(u,a)\wedge T\wedge S=dd^cu\wedge T\wedge S\quad on\ \{u>a\}.$$
    Now assume that $v\in PSH^-(\Omega)$. Since $\{u>v\}=\cup_{a\in\Bbb Q^-}\{u>a>v\}$, it suffices to show that
    $$dd^c\max(u,v)\wedge T\wedge S=dd^cu\wedge T\wedge S\quad on\ \{u>a>v\}$$
    for all $a\in\Bbb Q^-$. As $\max(u,v)\in \mathcal{F}^{T}(\Omega)$ then by the first step, we have
    \begin{align*}
        dd^c\max(u,v)\wedge T\wedge S_{|\{\max(u,v)>a\}}&=dd^c\max(\max(u,v),a)\wedge T \wedge S_{|\{\max(u,v)>a\}}\\&=dd^c\max(u,v,a)\wedge T\wedge S_{|\{\max(u,v)>a\}}\\
        dd^cu\wedge T\wedge S_{|\{u>a\}}&=dd^c\max(u,a)\wedge T \wedge S_{|\{v>a\}}.
    \end{align*}
    The fact that  $\max(u,v,a)=\max(u,a)$ on the open set $\{a>v\}$ gives
    $$dd^c\max(u,v,a)\wedge T\wedge S_{|\{a>v\}}=dd^c\max(u,a)\wedge T\wedge S_{|\{a>v\}}.$$
    As  $\{u>a>v$\} is contained in $\{u>a\}$, in $\{\max(u,v)>v\}$ and in $\{a>v\}$, then by combining the last equalities we obtain
    $$dd^c\max(u,v)\wedge T\wedge S_{|\{u>a>v\}}=dd^c\max(u,a)\wedge T\wedge S_{|\{u>a>v\}}.$$
\end{proof}
    We can now prove an inequality analogous to Demailly's one found in \cite{Kh-Ph}.
\begin{prop}\
    \begin{enumerate}
        \item[a)] Let $u,v \in \mathcal{F}^{T}(\Omega)$ such that $(dd^cu)^q\wedge T(\{u=v=-\infty\})=0$ then
            $$(dd^c\max(u,v))^q\wedge T\geq \1_{\{u\geq v\}}(dd^cu)^q\wedge T+\1_{\{u<v\}}(dd^cv)^q\wedge T.$$
        \item[b)] Let $\mu$ be a positive measure vanishing on all pluripolar sets of $\Omega$ and  $u,v \in \mathcal{E}^{T}(\Omega)$ such that $(dd^cu)^q\wedge T\geq \mu$, $(dd^cv)^q\wedge T\geq \mu$. Then $(dd^cmax(u,v))^q\wedge T\geq \mu$.
    \end{enumerate}
\end{prop}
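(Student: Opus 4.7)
The plan for (a) is to approximate $\max(u,v)$ from above by $w_\varepsilon:=\max(u+\varepsilon,v)$, which decreases to $\max(u,v)$ as $\varepsilon\searrow 0$. Writing $\tilde w_\varepsilon:=w_\varepsilon-\varepsilon=\max(u,v-\varepsilon)$ with $u\in\mathcal{F}^{T}(\Omega)$ and $v-\varepsilon\in\mathcal{F}^{T}(\Omega)\subset PSH^-(\Omega)$, the analogue of Proposition \ref{prop 1} shows $\tilde w_\varepsilon\in\mathcal{F}^{T}(\Omega)$, and $dd^cw_\varepsilon=dd^c\tilde w_\varepsilon$. Iterating Theorem \ref{th 7} $q$ times with the pair $(u,v-\varepsilon)$, successively replacing copies of $dd^c\tilde w_\varepsilon$ by $dd^cu$ in the auxiliary wedge products, yields the identity of measures
$$(dd^cw_\varepsilon)^q\wedge T=(dd^cu)^q\wedge T\quad\hbox{on }\{u+\varepsilon>v\}.$$
A parallel iteration with the pair $(v-\varepsilon,u)$ (both in $\mathcal{F}^{T}(\Omega)$) gives
$$(dd^cw_\varepsilon)^q\wedge T=(dd^cv)^q\wedge T\quad\hbox{on }\{u+\varepsilon<v\}.$$
Since these two sets are disjoint, positivity of the three measures yields
$$(dd^cw_\varepsilon)^q\wedge T\geq \1_{\{u+\varepsilon>v\}}(dd^cu)^q\wedge T+\1_{\{u+\varepsilon<v\}}(dd^cv)^q\wedge T.$$

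Now I let $\varepsilon\searrow 0$. The left-hand side tends weakly to $(dd^c\max(u,v))^q\wedge T$ by the weak convergence of Monge-Amp\`ere currents under decreasing sequences in $\mathcal{F}^{T}(\Omega)$ (the analogue of Theorem \ref{th 3}). On the right-hand side, $\1_{\{u+\varepsilon<v\}}$ increases pointwise to $\1_{\{u<v\}}$, so monotone convergence delivers the $v$-term. The indicator $\1_{\{u+\varepsilon>v\}}$ decreases pointwise to $\1_{\{u\geq v\}\smallsetminus\{u=v=-\infty\}}$, since at any point where $u=v=-\infty$ one has $u+\varepsilon=-\infty=v$ for every $\varepsilon>0$. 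It is exactly here that the hypothesis $(dd^cu)^q\wedge T(\{u=v=-\infty\})=0$ is invoked: modulo a $(dd^cu)^q\wedge T$-null set this pointwise limit coincides with $\1_{\{u\geq v\}}$, and dominated convergence delivers the $u$-term. Testing against arbitrary $\chi\in\mathcal{D}(\Omega)$ with $\chi\geq 0$ gives (a) as an inequality of positive measures.

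For part (b), the definition of $\mathcal{E}^{T}(\Omega)$ makes the statement local, so I may assume $u,v\in\mathcal{F}^{T}(\Omega)$. Rerunning the same construction and using directly the hypotheses $(dd^cu)^q\wedge T\geq\mu$ and $(dd^cv)^q\wedge T\geq\mu$ gives
$$(dd^cw_\varepsilon)^q\wedge T\geq \1_{\{u+\varepsilon\neq v\}}\mu.$$
The set $\{u=v=-\infty\}\subset\{u=-\infty\}$ is pluripolar and $\mu$ vanishes on pluripolar sets, so $\mu(\{u=v=-\infty\})=0$. Since $\1_{\{u+\varepsilon\neq v\}}\to\1_{\Omega\smallsetminus\{u=v=-\infty\}}$ pointwise, dominated convergence for $\mu$ combined with the weak convergence of the left-hand side produces $(dd^c\max(u,v))^q\wedge T\geq\mu$.

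The main obstacle is justifying the iterated identity $(dd^cw_\varepsilon)^q\wedge T=(dd^cu)^q\wedge T$ as an equality of measures on the (generally non-open) Borel set $\{u+\varepsilon>v\}$ through repeated application of Theorem \ref{th 7}: at each step the auxiliary currents must come from $\mathcal{F}^{T}(\Omega)$, and one must check that the measure equalities restrict consistently to a Borel set rather than only to an open one. The hypothesis in (a) is indispensable because the $\varepsilon$-approximation cannot detect points where both $u$ and $v$ equal $-\infty$, and without it the decreasing limit of $\1_{\{u+\varepsilon>v\}}$ would miss mass of $(dd^cu)^q\wedge T$ potentially sitting on $\{u=v=-\infty\}$.
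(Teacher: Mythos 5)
Your proof follows essentially the same route as the paper's: approximate $\max(u,v)$ by $\max(u,v-\varepsilon)$, identify $(dd^c\max(u,v-\varepsilon))^q\wedge T$ with $(dd^cu)^q\wedge T$ and $(dd^cv)^q\wedge T$ on the two strict-inequality sets by iterating Theorem \ref{th 7}, and pass to the limit, with the hypothesis $(dd^cu)^q\wedge T(\{u=v=-\infty\})=0$ absorbing exactly the set that the strict sets miss. The only difference is cosmetic: the paper selects a subsequence $\epsilon_j$ with $(dd^cu)^q\wedge T(\{u=v-\epsilon_j\})=0$ so as to upgrade $\{u>v-\epsilon_j\}$ to $\{u\geq v-\epsilon_j\}$, whereas you keep the strict sets and use $\{u+\varepsilon>v\}\supseteq\{u\geq v\}\smallsetminus\{u=v=-\infty\}$ directly, which is equally valid (your incidental claim that $v-\varepsilon\in\mathcal F^{T}(\Omega)$ is false in general, but it is only needed for the swapped application of Theorem \ref{th 7}, a step the paper performs with the same implicit license).
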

\begin{proof}
    a) For each $\epsilon > 0$ put $A_\epsilon=\{u=v-\epsilon\}\smallsetminus \{u=v=-\infty\}$. Since $A_\epsilon\cap A_\delta= \emptyset$ for $\epsilon\not=\delta$ then there exists $\epsilon_j\searrow0$ such that $(dd^cu)^q\wedge T(A_{\epsilon_j})=0$ for $j\geq 1$. On the other hand, since $(dd^cu)^q\wedge T(\{u=v=-\infty\})=0$ we have $(dd^cu)^q\wedge T(\{u=v-\epsilon_j\})=0$ for $j\geq 1$. Using theorem  \ref{th 7} it follows that
    $$\begin{array}{lcl}
        \ds (dd^c\max(u,v-\epsilon_j))^q\wedge(dd^cw)^q\wedge T\\
        \geq (dd^c\max(u,v-\epsilon_j))^q\wedge T_{|\{u>v-\epsilon_j\}}+(dd^c\max(u,v-\epsilon_j))^q\wedge T_{|\{u<v-\epsilon_j\}}\\
        =(dd^cu)^q\wedge T_{|\{u>v-\epsilon_j\}}+(dd^cv)^q\wedge T_{|\{u<v-\epsilon_j\}}\\
        =\1_{\{u\geq v-\epsilon_j\}}(dd^cu)^q\wedge T+\1_{\{u<v-\epsilon_j\}}(dd^cv)^q\wedge T\\
        \geq\1_{\{u\geq v\}}(dd^cu)^q\wedge T+\1_{\{u<v-\epsilon_j\}}(dd^cv)^q\wedge T.
    \end{array}$$
    Letting $j\to+\infty$ and by Theorem \ref{th 3}, we get
    $$(dd^c\max(u,v))^q\wedge T\geq \1_{\{u\geq v\}}(dd^cu)^q\wedge T+\1_{\{u<v\}}(dd^cu)^q\wedge T$$
    because $\max(u,v-\epsilon_j)\nearrow \max(u,v)$ and $\1_{\{u<v-\epsilon_j\}}\nearrow\1_{\{u<v\}}$ as  $j\to+\infty$.\\
    b) Argument as a).
\end{proof}

\begin{prop}
    Let $u\in\mathcal F^T(\Omega),\ v\in\mathcal E^T(\Omega)$. Then
    $$\begin{array}{l}
        \ds\frac{1}{q!}\int_{\{u<v\}}(v-u)^q\wedge(dd^cw)^q \wedge T+\int_{\{u<v\}}(r-w)(dd^cv)^q\wedge T\\
        \ds \leq\int_{\{u<v\}\cup\{u=v=-\infty\}}(r-w)(dd^cu)^q\wedge T
    \end{array}$$
    for  $w\in PSH(\Omega,[0,1])$ and all $r\geq 1$.
\end{prop}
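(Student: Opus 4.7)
The strategy is to reduce to Proposition~\ref{prop 5}(a) by approximating $v\in\mathcal E^T(\Omega)$ from above by a decreasing sequence in $\mathcal F^T(\Omega)$, and then to extract the $(dd^cv)^q\wedge T$-contribution via the Demailly-type inequality stated just before this proposition. Fix a negative exhaustion function $h\in\mathcal E_0^T(\Omega)$ and, for $k\geq1$, set $v^{(k)}:=\max(v,kh)$. Standard arguments show that $v^{(k)}\in\mathcal E_0^T(\Omega)\subset\mathcal F^T(\Omega)$: it is bounded below by $kh$, tends to $0$ at $\partial\Omega\cap\mathrm{Supp}\,T$ because $kh$ does, and its Monge--Amp\`ere mass is controlled by $k^q\int(dd^ch)^q\wedge T$ via the comparison principle for bounded psh functions. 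Moreover $v^{(k)}\searrow v$ pointwise as $k\to\infty$. Put $V_k:=\max(u,v^{(k)})$; arguing as in Proposition~\ref{prop 1}, one has $V_k\in\mathcal F^T(\Omega)$ with $u\leq V_k$.

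Apply Proposition~\ref{prop 5}(a) to the pair $u\leq V_k$ in $\mathcal F^T(\Omega)$. Since $v^{(k)}$ is bounded, the hypothesis $(dd^cu)^q\wedge T(\{u=v^{(k)}=-\infty\})=0$ in part~(a) of the preceding proposition is automatically satisfied, and the Demailly-type inequality gives
$$(dd^cV_k)^q\wedge T\geq\1_{\{u\geq v^{(k)}\}}(dd^cu)^q\wedge T+\1_{\{u<v^{(k)}\}}(dd^cv^{(k)})^q\wedge T.$$
Plugging this into Proposition~\ref{prop 5}(a), using the identity $(V_k-u)^q=(v^{(k)}-u)^q\1_{\{u<v^{(k)}\}}$, and cancelling the common term $\int_{\{u\geq v^{(k)}\}}(r-w)(dd^cu)^q\wedge T$ on both sides yields the level-$k$ inequality
$$\frac{1}{q!}\int_{\{u<v^{(k)}\}}(v^{(k)}-u)^q(dd^cw)^q\wedge T+\int_{\{u<v^{(k)}\}}(r-w)(dd^cv^{(k)})^q\wedge T\leq\int_{\{u<v^{(k)}\}}(r-w)(dd^cu)^q\wedge T.$$

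Finally, pass to the limit $k\to\infty$. A case-by-case check shows $\{u<v^{(k)}\}\searrow E:=\{u<v\}\cup\{u=v=-\infty\}$; the $-\infty$-set joins the limit precisely because $v^{(k)}(z)=kh(z)>-\infty=u(z)$ there. The right-hand side tends to $\int_E(r-w)(dd^cu)^q\wedge T$ by dominated convergence, since $(dd^cu)^q\wedge T$ is a finite positive measure. The first left-hand term converges to $\frac{1}{q!}\int_{\{u<v\}}(v-u)^q(dd^cw)^q\wedge T$ by decreasing monotone convergence (the $k=1$ integral is finite by the level-$1$ inequality), using that $(dd^cw)^q\wedge T$ puts no mass on the pluripolar set $\{u=-\infty\}\supset\{u=v=-\infty\}$ because $w$ is bounded. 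The delicate step, the main obstacle, is the middle term: the sets $\{u<v^{(k)}\}$ are not open (both functions being only upper semicontinuous), so one cannot apply weak convergence of measures directly. The remedy is to observe first that $\{u<v\}\subset\{u<v^{(k)}\}$, so that
$$\int_{\{u<v^{(k)}\}}(r-w)(dd^cv^{(k)})^q\wedge T\geq\int_{\{u<v\}}(r-w)(dd^cv^{(k)})^q\wedge T,$$
and then to invoke the weak convergence $(dd^cv^{(k)})^q\wedge T\to(dd^cv)^q\wedge T$ (which holds locally by the Cegrell-type construction of the operator on $\mathcal E^T$ underlying Theorem~\ref{th 3}) together with the $C_T$-quasicontinuity of $u$ and $v$ (Corollary~\ref{cor 2}): outside sets of arbitrarily small $C_T$-capacity, both functions are continuous, hence $\{u<v\}$ is open there, and the lower semicontinuity of $r-w$ gives
$$\liminf_{k\to+\infty}\int_{\{u<v\}}(r-w)(dd^cv^{(k)})^q\wedge T\geq\int_{\{u<v\}}(r-w)(dd^cv)^q\wedge T,$$
the residual contribution of the small-capacity set being controlled by Propositions~\ref{prop 3} and~\ref{prop 4}. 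Combining the three limits in the level-$k$ inequality gives the claim.
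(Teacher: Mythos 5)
Your overall architecture is reasonable: approximating $v$ from above by $v^{(k)}=\max(v,kh)\in\mathcal E_0^T(\Omega)$, applying Proposition \ref{prop 5}(a) to $u\le V_k=\max(u,v^{(k)})$, splitting $(dd^cV_k)^q\wedge T$ with the Demailly-type inequality (whose hypothesis is indeed vacuous since $v^{(k)}$ is bounded), and identifying $\bigcap_k\{u<v^{(k)}\}=\{u<v\}\cup\{u=v=-\infty\}$ are all correct, as are your treatments of the first and third terms. But the step you yourself call ``the main obstacle'' is a genuine gap, not a delicate technicality. To deduce $\liminf_k\int_{\{u<v\}}(r-w)(dd^cv^{(k)})^q\wedge T\ge\int_{\{u<v\}}(r-w)(dd^cv)^q\wedge T$ from weak convergence plus quasicontinuity, you must excise an open set $G$ of small capacity off which $\{u<v\}$ is relatively open, and the excised error is $\int_G(r-w)(dd^cv^{(k)})^q\wedge T\le r\int_G(dd^cv^{(k)})^q\wedge T$, which has to be small \emph{uniformly in $k$}. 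Propositions \ref{prop 3} and \ref{prop 4} do not provide this: they estimate $C_T$ of sublevel sets of a fixed function in $\mathcal F^T$ or $\mathcal E_p^T$, not the $(dd^cv^{(k)})^q\wedge T$-mass of an arbitrary small-capacity set. The only bound coming from the definition of $C_T$ is $\int_G(dd^cv^{(k)})^q\wedge T\le\|v^{(k)}\|_\infty^q\,C_T(G)$, and $\|v^{(k)}\|_\infty\sim k\|h\|_\infty$ blows up; and since $v$ is only assumed to lie in $\mathcal E^T(\Omega)$, there is no uniform mass or energy bound on the family $(v^{(k)})_k$ with which to repair the estimate. As written, the limit of the middle term is unproved.

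The paper takes a different route that avoids this entirely. It sets $\widetilde v=\max(u,v-\varepsilon)$, which lies in $\mathcal F^T(\Omega)$ because $u$ does, applies Proposition \ref{prop 5}(a) to the pair $u\le\widetilde v$, and then uses the locality statement (Theorem \ref{th 7}) to identify $(dd^c\widetilde v)^q\wedge T$ with $(dd^cv)^q\wedge T$ on $\{u<v-\varepsilon\}$ and with $(dd^cu)^q\wedge T$ on $\{u>v-\varepsilon\}$. After this substitution the middle term is $\int_{\{u\le v-\varepsilon\}}(r-w)(dd^cv)^q\wedge T$: the measure is fixed and only the domain of integration depends on $\varepsilon$, so letting $\varepsilon\searrow0$ over the increasing sets $\{u<v-\varepsilon\}\uparrow\{u<v\}$ is plain monotone convergence, with no weak limits over quasi-open sets. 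If you wish to salvage your approximation from above, you would need to supply precisely the missing ingredient: a bound on $\int_G(dd^cv^{(k)})^q\wedge T$ in terms of $C_T(G)$ that is uniform in $k$.
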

\begin{proof}
    Let $\varepsilon>0$ and set $\widetilde{v}=\max(u,v-\varepsilon)$. By Inequality (\ref{eq 4.1}) in Proposition \ref{prop 5} we have
    $$\begin{array}{l}
        \ds\frac{1}{q!}\int_{\Omega}(\widetilde{v}-u)^q\wedge(dd^cw)^q \wedge T+\int_{\Omega}(r-w)(dd^c\widetilde{v})^q\wedge T 	 \leq\ds\int_{\Omega}(r-w)(dd^cu)^q\wedge T.
    \end{array}$$
    Since $\{u<\widetilde{v}\}=\{u<v-\varepsilon\}$ then thanks to Theorem \ref{th 7}, we have
    $$\begin{array}{l}
        \ds\frac{1}{q!}\int_{\{u<v-\varepsilon\}}(v-\varepsilon-u)^q\wedge(dd^cw)^q \wedge T+\int_{\{u\leq v-\varepsilon\}}(r-w) (dd^cv)^q\wedge T\\ 	
        \leq\ds\int_{\{u\leq v-\varepsilon\}}(r-w)(dd^cu)^q\wedge T.
    \end{array}$$
    As $\{u\leq v-\varepsilon\}\subset\{u<v\}\cup\{u=v=-\infty\}$ so
    $$\begin{array}{l}
        \ds\frac{1}{q!}\int_{\{u<v-\varepsilon\}}(v-\varepsilon-u)^q\wedge(dd^cw)^q \wedge T+\int_{\{u\leq v-\varepsilon\}}(r-w) (dd^cv)^q\wedge T\\ 	
        \leq\ds\int_{\{u\leq v\}\cup\{u=v=-\infty\}}(r-w)(dd^cu)^q\wedge T.
    \end{array}$$
    Letting $\varepsilon\to0$ we obtain
    $$\begin{array}{l}
        \ds\frac{1}{q!}\int_{\{u<v\}}(v-u)^q\wedge(dd^cw)^q\wedge T+\int_{\{u< v\}}(r-w) (dd^cv)^q\wedge T\\
        \leq\ds\int_{\{u< v\}\cup\{u=v=-\infty\}}(r-w)(dd^cu)^q\wedge T.
    \end{array}$$
\end{proof}

To conclude the proof of the main result, it suffices to take  $w=0$  and $r=1$ in the previous proposition.

\end{document}